\documentclass[11pt,twoside]{article}
\usepackage{relsize,amsmath,amsfonts,amssymb,amsthm,graphicx,epsf,epstopdf,
color,pifont,subfigure,flafter,bm,amscd,multirow,setspace,tikz,caption,mathrsfs,enumerate,enumitem}
\allowdisplaybreaks[4]
\usepackage[numbers, sort]{natbib}

\newtheorem{lm}{Lemma}[section]
\newtheorem{thm}{Theorem}[section]

\newtheorem{prop}{{\bf Proposition}}[section]

\newcounter{saveeqn}%

\makeatletter
\evensidemargin
\oddsidemargin
\makeatother

\title{\Large\bf Bifurcations of grazing loops of arbitrary tangent multiplicity
in piecewise-smooth systems
}

\author{Xingwu Chen$^1$, ~~Zhihao Fang$^2$\!\!
\footnote{Corresponding author: Zhihao Fang (Fangmath010@outlook.com)}
, ~~Tao Li$^3$
\\
{\small 1. School of Mathematics, Sichuan University, Chengdu, Sichuan 610064, P. R. China}\\
{\small 2. School of Mathematics, China University of Mining and Technology,}\\
{\small Xuzhou, Jiangsu 221116, P. R. China}\\
{\small 3. School of Mathematics, Southwestern University of Finance and Economics,}\\
  {\small Chengdu, Sichuan 611130, P. R. China}
}
\date{}

\begin{document}
\maketitle


\begin{abstract}
In piecewise-smooth differential systems, a hyperbolic limit cycle of a
subsystem loses its structural stability if it grazes the switching manifold at
a tangent point. Such a cycle is called a grazing loop and in this paper
we investigate its bifurcations for arbitrary tangent multiplicity.
For the low-multiplicity tangency, the recurrences are comprehensively
captured by a functional perturbation with two parameters in previous
publications, where the parameters characterize the recurrences near the
tangent point and the limit cycle respectively. However, for high-multiplicity
tangency, these parameters fail to capture the recurrences and thus,
Poincar\'e return maps can not be defined as usual. To address these
challenges, we construct a functional perturbation with functions to
clarify the recurrences and simultaneously, propose a localization method
to make these two recurrences equivalent. We finally establish a quantitative
relationship between the multiplicity of tangency and the numbers of
crossing limit cycles, sliding loops bifurcating from the grazing loop and the number of tangent points on these sliding loops.

\vskip 0.2cm

{\bf 2020 MSC:} 34A36, 34C23, 37G15.

{\bf Keywords:} bifurcation, functional perturbation, grazing loop, localization method, piecewise-smooth system.

\end{abstract}

\baselineskip 15pt
\parskip 10pt
\thispagestyle{empty}
\setcounter{page}{1}

\section{Introduction}

\setcounter{equation}{0}
\setcounter{lm}{0}
\setcounter{thm}{0}
\setcounter{rmk}{0}
\setcounter{df}{0}
\setcounter{cor}{0}
\setcounter{prop}{0}

Consider the following piecewise-smooth differential system (PWS system for
short)
\begin{equation}
    \begin{aligned}
            \left( \begin{array}{c}
            \dot{x}\\
            \dot{y}\\
        \end{array} \right) =\left\{
        \begin{aligned}
    &		\left( \begin{array}{c}
                f^+(x,y)\\
                g^+(x,y)
            \end{array} \right)   &&\qquad\mathrm{if}~(x,y)\in\Sigma^+,\\
    &		\left( \begin{array}{c}
                f^-(x,y)\\
                g^-(x,y)
            \end{array} \right)   &&\qquad \mathrm{if}~(x,y)\in\Sigma^-,
        \end{aligned} \right.
        \label{pws1}
    \end{aligned}
\end{equation}
where $f^\pm(x,y)$, $g^\pm(x,y)\in C^{\infty}(\cal U)$ and
$\Sigma^\pm:=\left\{(x,y)\in{\cal U}:~y\gtrless0\right\}$ for a bounded
open set ${\cal U}\subset{\mathbb R}^2$ containing the origin $O$.
As in \cite{Teixeira11}, the set $\Sigma:=\left\{(x,y)\in{\cal U}:~y=0\right\}$
is called a {\it switching manifold}. Throughout this paper, the smooth
system defined on $\Sigma^+$ (resp. $\Sigma^-$) is called the
{\it upper subsystem} (resp. {\it lower subsystem}).
As indicated in \cite{Filippov88}, the solution of system~\eqref{pws1} is an
absolutely continuous function $(x(t),y(t))^\top$ defined over some
interval $I$ which satisfies the following differential inclusion
\begin{equation*}
\left(
\begin{aligned}
&\dot x \\
&\dot y
\end{aligned}
\right)\in
F(x,y):=\left\{
\begin{aligned}
&\left\{{\cal Z}^+(x,y)\right\},  &&(x,y)\in\Sigma^+, \\
&\left\{a{\cal Z}^+(x,y)+(1-a){\cal Z}^-(x,y):~a\in[0,1]\right\}, &&(x,y)\in\Sigma, \\
&\left\{{\cal Z}^-(x,y)\right\},  &&(x,y)\in\Sigma^-
\end{aligned}
\right.
\end{equation*}
almost everywhere over $I$, where ${\cal Z}^\pm(x,y):=\left(f^\pm(x,y),g^\pm(x,y)\right)$
denote the upper and lower vector field of system~\eqref{pws1} respectively.
Further, the orbit of system~\eqref{pws1} is the set
$\{(x(t),y(t)):~t\in I\}\subset{\mathbb R}^2$.

As indicated in \cite{Kuznetsov03}, function $h(x):=g^+(x,0)g^-(x,0)$ is defined to
characterize dynamical behaviors near $\Sigma$. It is not hard to check
that $h(x)>0$ implies the orbits of one subsystem reach $\Sigma$ and the orbits of
the other subsystem escape from $\Sigma$. Thus, orbits of system~\eqref{pws1} cross
$\Sigma$, i.e., the {\it crossing motions}, and further, the set
$\Sigma_c:=\left\{(x,y)\in\Sigma:~h(x)>0\right\}$ is called the {\it crossing region}
as indicated in \cite{Kuznetsov03,Teixeira11,Filippov88}.
On the other hand, $h(x)<0$ implies that the orbits of two subsystems reach or
escape from $\Sigma$ simultaneously. Further, the set $\Sigma_s:=\left\{(x,y)\in\Sigma:~h(x)<0\right\}$
is called the {\it sliding region} and a {\it sliding vector field}
is defined on $\Sigma_s$ as in \cite{Kuznetsov03,Teixeira11,Filippov88}.
The orbits of two subsystems reach $\Sigma_s$ and connect the sliding orbits,
i.e., the {\it sliding motions}.

A point $p:(x_0,0)\in\Sigma$ is called a {\it tangent point} if $h(x_0)=0$
and it is not an equilibrium of subsystems (see, e.g., \cite{Filippov88,Kuznetsov03,Teixeira11}).
By the visibility of the tangent orbits, tangent points can be
divided into $4$ distinct types as shown in Figure~\ref{Fig-TPV},
i.e., visible one, invisible one, left one and right one, as in \cite{CF25}.
\begin{figure}[h]
\centering
\subfigure[visible]
 {  \scalebox{0.34}[0.34]{
   \includegraphics{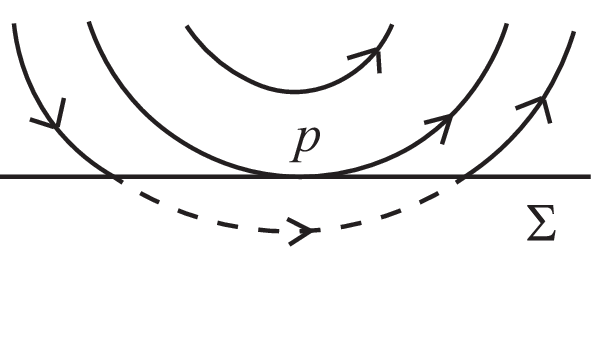}}}
\subfigure[invisible]
 {  \scalebox{0.34}[0.34]{
   \includegraphics{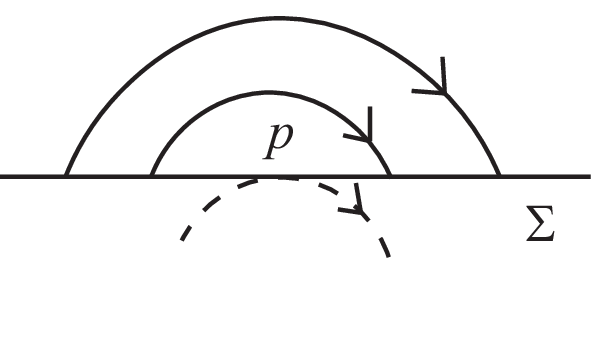}}}
\subfigure[left]
 {  \scalebox{0.34}[0.34]{
   \includegraphics{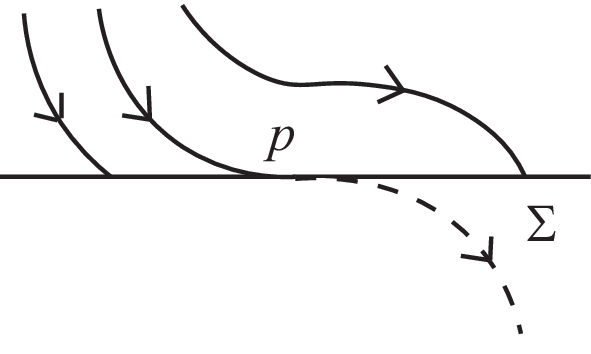}}}
\subfigure[right]
 {  \scalebox{0.34}[0.34]{
   \includegraphics{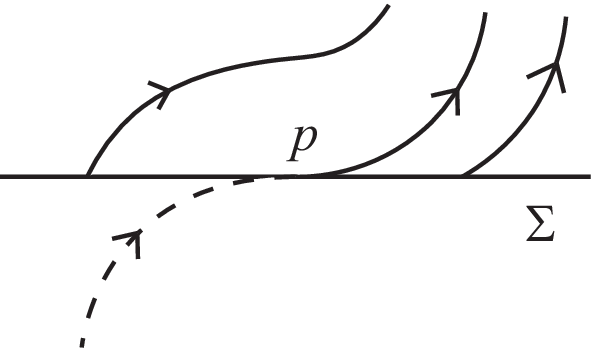}}}
   \caption{The visibility of tangent points}
\label{Fig-TPV}
\end{figure}
Clearly, tangent point $p:(x_0,0)$ corresponds a zero point $x_0$ of $g^+(x,0)$ or
$g^-(x,0)$. Thus, as indicated in \cite{CF25} a tangent point $p:(x_0,0)$
of system~\eqref{pws1} is called to be of multiplicity $(m^+,m^-)$ if
$x_0$ is a zero of $g^+(x,0)$ (resp. $g^-(x,0)$) of multiplicity $m^+$ (resp. $m^-$), i.e.,
\begin{eqnarray*}
&&g^+(x_0,0)=\frac{\partial g^+}{\partial x}(x_0,0)=...=\frac{\partial^{(m^+-1)}g^+}{\partial x^{(m^+-1)}}(x_0,0)=0,~\frac{\partial^{m^+}g^+}{\partial x^{m^+}}(x_0,0)\ne 0,\\
&&g^-(x_0,0)=\frac{\partial g^-}{\partial x}(x_0,0)=...=\frac{\partial^{(m^--1)}g^-}{\partial x^{(m^--1)}}(x_0,0)=0,~\frac{\partial^{m^-}g^-}{\partial x^{m^-}}(x_0,0)\ne 0,
\end{eqnarray*}
where non-negative integers $m^\pm$ satisfy $m^++m^-\ge 1$.
It is not hard to check that a tangent point is visible or invisible
(resp. left or right) if and only if $m^+$ is odd (resp. even).
Analysis is similar for the lower subsystem and we omit the statements.
A tangent point of multiplicity $(1,0)$ (resp. $(0,1)$) is generally
called a {\it fold} of the upper (resp. lower) subsystem (see, e.g., \cite{Kuznetsov03,Teixeira11}).
A tangent point of multiplicity $(2,0)$
(resp. $(0,2)$) is generally called a {\it cusp} of the upper (resp. lower)
subsystem (see, e.g., \cite{Kuznetsov03,Teixeira11}).
Under perturbation, a tangent point can break into several tangent points,
some new local recurrence may occur with respect to $\Sigma$ and
some new invariant sets such as crossing limit cycles may appear.
Here ``crossing limit cycle'' means an isolated oriented
Jordan curve composed of two regular orbits of subsystems and two
crossing points.
Bifurcations of tangent points can be found in \cite{Bonet18,Kuznetsov03,Teixeira11,Ponce22,Han12,Zhang10,Siller} for
multiplicity $(1,1)$ and $(2,0)$, in \cite{CF21,Teixeira11} for multiplicity $(2,1)$ and in
\cite{Esteban23,Novaes25,CF25,Novaes21,Buzzi23} for general multiplicity $(m^+,m^-)$.

It is well-known that a hyperbolic limit cycle is structurally stable
in smooth differential systems, i.e., it is preserved under small perturbations
(see, e.g., \cite{KuznetsovBook}). However, in piecewise-smooth
system (\ref{pws1}), it is shown in \cite{Bernardo01,Bernardo98,ChenH18,Kuznetsov03,Teixeira11,Li20,Han13,HuangGW}
that a hyperbolic limit cycle of a smooth subsystem is no longer structurally
stable if it grazes $\Sigma$ at a tangent point as shown in
Figure~\ref{Fig-Gra} and, it is usually called a {\it grazing loop}
(or to be {\it grazing}) (see, e.g., \cite{Kuznetsov03,Teixeira11}).
\begin{figure}[h]
\centering
\subfigure[$g^-(0,0)>0$]
 {
  \scalebox{0.34}[0.34]{
   \includegraphics{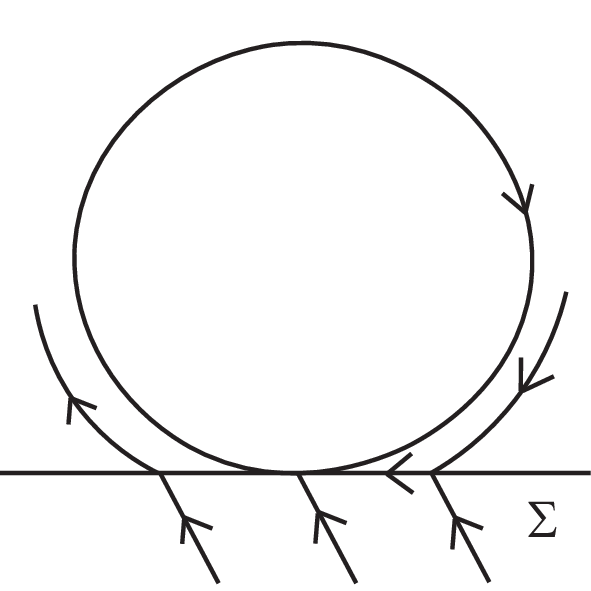}}}
\subfigure[$g^-(0,0)<0$]
 {
  \scalebox{0.34}[0.34]{
   \includegraphics{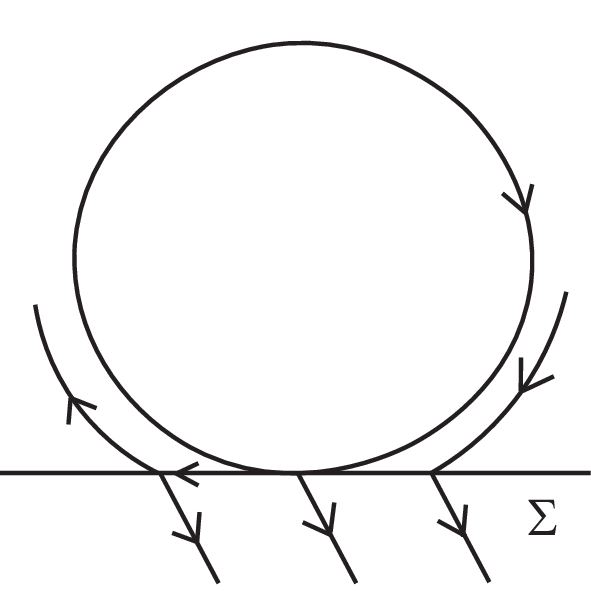}}}
\subfigure[$f^-(0,0)g^-_x(0,0)>0$]
 {
  \scalebox{0.34}[0.34]{
   \includegraphics{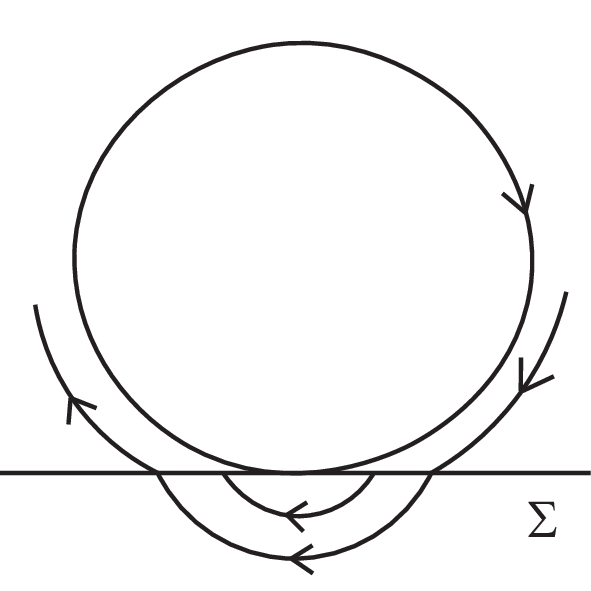}}}
\subfigure[$f^-(0,0)g^-_x(0,0)<0$]
 {
  \scalebox{0.34}[0.34]{
   \includegraphics{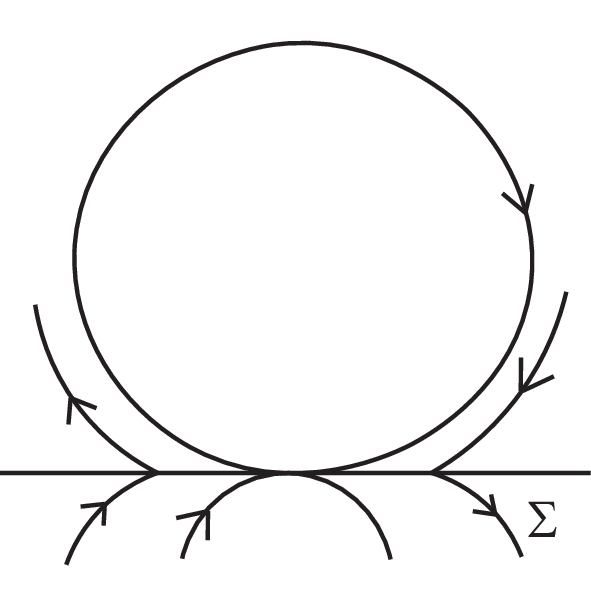}}}
   \caption{The examples illustrating distinguish grazing loops}
\label{Fig-Gra}
\end{figure}

In previous publications, people focus on investigating dynamical behaviors
of the grazing loops shown in Figure~\ref{Fig-Gra}. They are clockwise hyperbolic
limit cycles of the upper subsystem and tangent with $\Sigma$ at the origin $O$
which is a tangent point of multiplicity $(1,0)$ or $(1,1)$.
The grazing loops in Figure~\ref{Fig-Gra}(a)(b) can undergo two
distinct {\it grazing-sliding} bifurcations for different stability.
Precisely, there is either one sliding loop connecting one tangent point or
one standard limit cycle under some perturbations if it is a stable limit
cycle for (a). Here ``standard limit cycle'' means the limit cycle in $\Sigma^+$
or $\Sigma^-$ and ``sliding loop'' means an oriented Jordan curve composed of
regular orbits of subsystems and sliding orbits. In contrast, if it is an
unstable limit cycle, a sliding loop connecting one tangent point and one
standard limit cycle appear simultaneously under some perturbations
(see e.g., \cite{Kuznetsov03,Teixeira11,Kunze,Bernardo08,Bernardo082}).
Dynamical behaviors are similar for (b) and we omit the statements.
The grazing loops in Figure~\ref{Fig-Gra}(c)(d)  can undergo
two distinct {\it degenerate grazing-sliding} bifurcations.
It is proved in \cite{Han13} that there exist some perturbations such that
there is a crossing limit cycle and one standard limit cycle bifurcating from
the grazing loop in Figure~\ref{Fig-Gra}(c). Later, a complete bifurcation
diagram is obtained in \cite{Li20} and it is shown that there exist some
perturbations such that there are two crossing limit cycles
and one standard limit cycle. Meanwhile, there also exist some perturbations
such that there is one crossing limit cycle, one sliding loop connecting one
tangent point and one standard limit cycle.
For the grazing loop in Figure~\ref{Fig-Gra}(d), a complete
bifurcation diagram is given in \cite{Li20} and it is shown
that there exist some perturbations such that there is one
sliding loop connecting one tangent point and one standard limit cycle.
Recently, the grazing loop connecting a tangent point of multiplicity 
$(1,m)$ for general $m\ge2$ is investigated in \cite{CFSCM}. It is proved 
that there exist some perturbations such that there are at least 
$m$ crossing limit cycles and sliding loops bifurcating from the grazing loop,
i.e., a relationship between the multiplicity $m$ and the number of 
crossing limit cycles and sliding loops is established. More importantly,
\cite{CFSCM} uncovers the mechanism by which crossing limit cycles 
and sliding loops arise from the splitting of the tangent points, 
and proposes sufficient conditions for this mechanism to occur.

Note that bifurcations of grazing loops in previous publications focus
on the cases where the limit cycle is quadratically tangent to $\Sigma$. 
Thus, it is natural to consider the following question.
\begin{enumerate}[topsep=-3pt,itemsep=-3pt,partopsep=-3pt]
    \item[(Q)]
{\it How about bifurcations of grazing loops of arbitrary-multiplicity tangency
with $\Sigma$?  }
\end{enumerate}
In this paper, we focus on this question and investigate bifurcations
associated with two distinct types of grazing loops. One type corresponds
to the configuration in Figure~\ref{Fig-Gra}(a)(b), where the orbit passing
through $O$ of lower subsystem is transversal with $\Sigma$.
The other type corresponds to the configuration in Figure~\ref{Fig-Gra}(c)(d),
where the orbit passing through $O$ of the lower subsystem is tangent with
$\Sigma$ quadratically. We aim to establish a quantitative relationship
between the multiplicity of tangency and the numbers of crossing limit
cycles and sliding loops bifurcating from the grazing loop. 
Meanwhile, we also aim to investigate whether these grazing loops exhibit 
the mechanism given in \cite{CFSCM}.

This paper is organized as follows. The main results and difficulties
are stated in section 2. The perturbation functions are analyzed
in section 3. Some preliminary lemmas and proof of main results are
stated in section 4 and section 5 respectively. Finally,
we summarize conclusions and give some discussion remarks in section 6
to end this paper.

\section{Technique challenges and main results}

\setcounter{equation}{0}
\setcounter{lm}{0}
\setcounter{thm}{0}
\setcounter{rmk}{0}
\setcounter{df}{0}
\setcounter{cor}{0}

In this paper, we consider the bifurcations of the grazing loop connecting
one tangent point of multiplicity $(2k+1,0)$ and $(2k+1,1)$ for general
$k\in{\mathbb Z}^+$. There are two main difficulties for question (Q)
arising from the arbitrary multiplicity of the tangent point.
One difficulty is that the change of local recurrence is very complicated
under perturbations. Although this difficulty is addressed by a functional
perturbation with functions in \cite{CF25}, the characterization of
the recurrence is not precise enough to analyze the Poincar\'e return map,
particularly the locations of tangent points. We propose a new functional
perturbation with functions to overcome this difficulty, which is shown in
sections 3 and 4. The other difficulty is that the relative positions between
the limit cycle and $\Sigma$ are very complicated under perturbations.
As discussion in section 1, there are several tangent points appearing and
consequently, there are several transversal or tangent intersections between
the limit cycle and $\Sigma$. Each type of these intersections corresponds
to a type of global recurrences and there is no method in the previous
publications to investigate this. We propose a localization method to
overcome this difficulty by establishing an equivalent relationship between
the local recurrences and the global recurrences. This method is shown in
section 5 and is called {\it localization of perturbations} in what follows.

Then we state the main results of this paper and come up with the
following assumptions.
\begin{enumerate}[topsep=-3pt,itemsep=-3pt,partopsep=-3pt]
    \item[{\bf A1}] The upper subsystem has a clockwise hyperbolic limit cycle
    which grazes $\Sigma$ at the origin $O$ and $O$ is a tangent point of
    multiplicity $m^+=2k+1$ for $k\in{\mathbb Z}^+$.
    \item[{\bf A2}] The origin $O$ is a transversal point for the lower
    subsystem, i.e., $g^-(0,0)\ne0$ or equivalently $O$ is a $0$-multiplicity tangent point for the lower subsystem.
    \item[{\bf A3}] The origin $O$ is a $1$-multiplicity tangent point for
    the lower subsystem and there are only crossing regions near $O$.
\end{enumerate}
It is not hard to check that system~\eqref{pws1} has a grazing loop with
the configurations shown in Figure~\ref{Fig-Gra}(a)(b)(resp. (c)(d)),
if it satisfies assumptions {\bf A1} and {\bf A2} (resp. {\bf A1} and {\bf A3}).
In what follows, they are denoted as $L_I$ and $L_{II}$ respectively.
Note that the assumption ${\bf A1}$ requires that $L_I$ and
$L_{II}$ are hyperbolic limit cycles of the upper subsystem. As indicated in
\cite{KuznetsovBook}, the hyperbolicity is characterized by $\sigma\ne 0$, where
\begin{equation}
    \sigma:=\int_{0}^{T}\left(\frac{\partial f^+(\gamma^+(s,0,0))}{\partial x}+\frac{\partial g^+(\gamma^+(s,0,0))}{\partial y}\right)ds.
    \label{sigmadef}
\end{equation}
Here $\gamma^+(t,x_0,y_0)$ denotes
the orbit of the upper subsystem of \eqref{pws1} with initial value $(x_0,y_0)$
and $T$ is the period of this limit cycle. For the grazing loop connecting
a tangent point of multiplicity $(2k+1,1)$, beside the types shown in
Figure~\ref{Fig-Gra}(c)(d) there are some other types, e.g., $f^-(0,0)>0$ and $g^-_x(0,0)>0$.
We omit these types because we focus on finding crossing periodic orbits
and the existence of crossing regions is necessary. As indicated in
\cite{CF25}, a visible tangent point of multiplicity $2k+1$ can break
into $2k+1$ tangent point of multiplicity $1$. Moreover, the visibility
of these tangent points follows an alternating pattern: visible, invisible,
visible, and so on. Thus, there are at most $k+1$ visible tangent points
bifurcating from $O$. For both two types of grazing loops $L_I$ and
$L_{II}$, all analysis is based on the appearance of all potential
tangent points.

For grazing loop $L_I$ (i.e., assumptions {\bf A1} and {\bf A2} hold), we have the following result.
\begin{thm}
Assume that system~\eqref{pws1} has a grazing loop $L_I$ for some
integer $k\ge1$. Then for each $\ell\in\{1,...,k+1\}$ there exist perturbations
of system~\eqref{pws1} such that there is a sliding loop connecting exactly
$\ell$ tangent points, and one additional standard limit cycle
if $g^-(0,0)$ and $\sigma$ have the same sign additionally, 
where $\sigma$ is given in \eqref{sigmadef}.
\label{thm1}
\end{thm}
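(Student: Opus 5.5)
We would work in local coordinates near $O$ and with an explicit functional perturbation. Since $O$ is a tangent point of multiplicity $2k+1$ of the upper subsystem and the limit cycle lies in $\overline{\Sigma^+}$, we may assume near $O$ that $f^+(0,0)>0$ (clockwise orientation) and $g^+(x,0)=a x^{2k+1}+O(x^{2k+2})$. The sign of $a$ is fixed by the requirement that the cycle stays above $\Sigma$, so $O$ is visible. We perturb only the upper field near $O$:
\[
g^+_\varepsilon(x,y)=g^+(x,y)+\phi(x)\,\psi(x,y),
\]
with $\phi(x)=a\bigl(\prod_{i=1}^{2k+1}(x-x_i)-x^{2k+1}\bigr)$, where the distinct small roots $x_1<\dots<x_{2k+1}$ are chosen freely and $\psi$ is a bump function. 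We also add one more scalar parameter $\mu$ that translates the cycle vertically (equivalently, $-\mu$ is added to the displacement). By the results of \cite{CF25} quoted before the theorem, $g^+_\varepsilon(\cdot,0)$ then has exactly $2k+1$ simple zeros near $0$, with alternating visibility starting and ending with visible. So there are $k+1$ visible folds $p_1,\dots,p_{k+1}$ at $x_1,x_3,\dots,x_{2k+1}$. By \textbf{A2} the lower field is transversal at $O$. The sign of $g^-(0,0)$ then determines which of the arcs where $g^+_\varepsilon(x,0)$ has a given sign are sliding and which are crossing.

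\textbf{Localization.} Since the cycle is hyperbolic, the upper flow maps a transversal section $\{x=-\delta\}$ to itself with multiplier $e^{\sigma}\ne0$ through the global excursion. We reduce the global recurrence to a local one. Near $O$, the unperturbed orbits of the upper subsystem are graphs $y=c+F(x)$ with $F(x)\sim \tilde a x^{2k+2}$, and the perturbation changes $F$ by a controlled amount. An orbit of the upper subsystem near the cycle returns to a neighbourhood of $O$ at "height" $c$. The key step is to show that the upper orbit passing through a visible fold $p_j$ is tangent to $\Sigma$ there. By tuning $\mu$ and the $x_i$, we can arrange that the global orbit from $p_{\ell}$ comes back and meets $\Sigma$ on the sliding segment adjacent to $p_1$, after grazing $\Sigma$ exactly at $p_1,\dots,p_{\ell-1}$ in between. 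That is, the heights of the local maxima of $F_\varepsilon$ near $x_j$, $j<\ell$, are equal to the maximum at $x_\ell$, or are controlled to be zero at touching points. This amounts to solving $\ell-1$ equations of the form $F_\varepsilon(x_{2j-1})=F_\varepsilon(x_{2\ell-1})$ in the free root parameters. This is possible because the values of the local maxima depend on the root differences with a nondegenerate (triangular-type) Jacobian. The sliding loop is then formed by the orbit from $p_\ell$ that goes round, touches the intermediate folds tangentially, lands in the sliding region, and slides along $\Sigma$ toward $p_\ell$. The sign of $g^-(0,0)$ guarantees that the sliding vector field points toward the visible fold, as in the classical grazing–sliding case $k=0$. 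The loop thus connects exactly $\ell$ tangent points. The remaining folds must stay off the loop, which we get by keeping the other maxima strictly below.

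\textbf{Standard limit cycle.} When $g^-(0,0)\sigma>0$, we extend the reasoning of the $(1,0)$ case in \cite{Kuznetsov03,Teixeira11}. The cycle is stable (respectively unstable) exactly when the sliding region lies where orbits could otherwise be absorbed, and the sign condition means the displacement function $d(c)=(e^{\sigma}-1)c-\mu+\dots$ has a zero $c^*>0$ above all local maxima of $F_\varepsilon$. Here $c^*$ is $O(\mu)$ while the fold heights are $O(|x_i|^{2k+2})$, so we choose $|x_i|^{2k+2}\ll\mu$ with the right sign of $\mu$. That zero gives a hyperbolic standard cycle in $\Sigma^+$ disjoint from $\Sigma$. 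The whole construction is consistent only when the required sign of $\mu$ for the sliding loop matches this one, which is exactly the same-sign hypothesis. The main obstacle is the localization step: proving rigorously that the local-maximum heights can be independently prescribed via the $x_i$ and $\mu$, and that the global return map preserves the ordering of tangencies. We would handle this first by an implicit function argument on the map $(x_1,\dots,x_{2k+1},\mu)\mapsto(\text{heights})$ at a scaled base configuration $x_i=\varepsilon\xi_i$.
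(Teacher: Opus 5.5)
Your strategy, with root positions setting the relative fold heights and one global vertical shift $\mu$, differs from the paper's bump-function construction and could work. As written, though, the central step produces the wrong object, it conflicts with your standard-cycle step, and the reachability argument is missing a base configuration.

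\textbf{Wrong conditions and inconsistent scaling.} Write the local orbits as $y=c+F_\varepsilon(x)$, with the minima of $F_\varepsilon$ at the visible folds $p_1,\dots,p_{k+1}$, numbered in the order the flow meets them. Put $c_j:=-F_\varepsilon(p_j)$, and let $\Pi$ be the global return in the variable $c$.
\begin{itemize}
\item Your equations $F_\varepsilon(p_j)=F_\varepsilon(p_\ell)$, $j<\ell$, mean $c_j=c_\ell$. The returning orbit that grazes $p_1,\dots,p_{\ell-1}$ then coincides locally with the tangent orbit of $p_\ell$. It reaches $p_\ell$ tangentially instead of landing on the sliding segment in front of it. You get a periodic orbit touching $\ell$ folds with no sliding arc, i.e.\ the intermediate grazing loop of Figure~\ref{Fig-THM1-1}, not a sliding loop.
\item The correct conditions are $c_1=\dots=c_{\ell-1}<c_\ell$, $c_j<c_\ell$ for $j>\ell$, and $\Pi(c_\ell)=c_1$.
\item The last equation forces the net shift of the return map to be of the size of the fold heights $|x_i|^{2k+2}$. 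That contradicts your later choice $|x_i|^{2k+2}\ll\mu$. In that regime the returning orbit lies at depth about $\mu$ and meets $\Sigma$ on the outermost sliding segment before reaching any fold, so no intermediate grazing is possible.
\end{itemize}
Either of two repairs works.
\begin{itemize}
\item Stay in the grazing regime. Writing $\Pi(c)=c^*+\kappa(c-c^*)+\cdots$ with $\kappa\approx e^{\sigma}$, the condition $\Pi(c_\ell)=c_1$ gives $c^*-c_\ell=(c_\ell-c_1)/(\kappa-1)$. This is positive when $\sigma>0$, so the standard cycle clears every fold automatically. The resulting loop touches $\ell-1$ folds tangentially and slides into one; that is more degenerate than the paper's loops but fits the statement.
\item Keep $\mu\gg|x_i|^{2k+2}$ and use a cascade. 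The returning orbit slides into $p_1$. The tangent orbit of each visited fold then lands on the sliding segment in front of the next fold whose $c_j$ is a strict record. The loop therefore visits exactly as many folds as $(c_1,\dots,c_{k+1})$ has records. This is the paper's kind of sliding loop and needs only strict inequalities.
\end{itemize}
All of this is for $g^-(0,0)>0$. For $g^-(0,0)<0$ the sliding segments repel and the sliding field points away from the visible folds, so you must reverse time rather than argue as in $k=0$.

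\textbf{Reachability of the height pattern.} The map from roots to differences of critical values of $Q(X)=\int_0^X\prod_i(S-\xi_i)\,dS$ is indeed a submersion whenever the roots are distinct. Its Jacobian is of Vandermonde type, not triangular. But the implicit function theorem only moves the critical values slightly away from those of a base configuration, and you never choose one.

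From equally spaced roots, the patterns needed for large $\ell$ are out of reach. Already for $k=2$, the middle minimum of $Q$ lies far above the two outer ones. So neither three records nor $c_1=c_2$ can be obtained by small changes. You need a base point at which all $k+1$ minima coincide, e.g.\ the critical points of the Chebyshev polynomial $T_{2k+2}$. Alternatively, invoke the global theorem that every alternating sequence of critical values is realized by a real polynomial.

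The paper sidesteps this issue. It keeps the $\lambda^+_i$ equally spaced and adds, on each interval between consecutive folds, a bump $H_i$. Each amplitude $l^+_{i,3}$ changes the orbit height with nonzero derivative only from that interval onward. That is exactly the triangular, one-parameter-at-a-time control you assert. The cut-offs \eqref{df-nu} and \eqref{df-mu} confine the perturbation to a $\delta$-box, so the global return stays that of $L_I$ and no separate translation parameter is needed. Once these points are repaired, your finite-dimensional unfolding is a legitimate alternative to the paper's construction.
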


As mentioned in section 1, the results given in \cite{Kuznetsov03} correspond
to the cases that $k=0$, i.e., there is a sliding loop connecting one tangent
point if $\sigma<0$, and one additional standard limit cycle if $\sigma>0$.
Thus, Theorem~\ref{thm1} generalizes the number of tangent point
on the sliding loop from the case $k=0$ to the case $k\ge1$ and
contributes the relationship between the number and the multiplicity of $O$.
Since the $k+1$ is the maximal number of visible tangent points,
the result in Theorem~\ref{thm1} is sharp with respect to the number of
tangent points on the sliding loop.

For grazing loop $L_{II}$ (i.e., assumptions {\bf A1} and {\bf A3} hold), let $\beta_c$, $\beta_s$ denote the numbers of
crossing limit cycles, sliding loops bifurcating from it respectively, and
we have the following result.
\begin{thm}
    Assume that system~\eqref{pws1} has a grazing loop $L_{II}$ for some
    integer $k\ge1$. Then the following statements hold.
    \begin{enumerate}[topsep=-3pt,itemsep=-3pt,partopsep=-3pt]
    \item[{\rm (a)}]For grazing loop $L_{II}$ in {\rm Figure~\ref{Fig-Gra}(c)}
    and each $\ell\in\{1+\lfloor k/2\rfloor,...,2+2\lfloor k/2\rfloor\}$,
    there exist perturbations of system~\eqref{pws1} such that there is
    one standard limit cycle and
    \begin{equation}
    \beta_c\ge\ell,~~~~~~\beta_s=2+2\lfloor k/2\rfloor-\ell,
    \label{Res}
    \end{equation}
  where $\lfloor k/2\rfloor$ denotes the maximal integer no greater than $k/2$.
    \item[{\rm (b)}]For grazing loop $L_{II}$ in {\rm Figure~\ref{Fig-Gra}(d)}
    and each $\ell\in\{1,...,k+1\}$,
    there exist perturbations of system~\eqref{pws1} such that there is
    one standard limit cycle and a sliding loop connecting exactly $\ell$
    tangent points.
    \end{enumerate}
    \label{thm2}
\end{thm}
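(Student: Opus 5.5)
The plan is to reduce the global problem near $L_{II}$ to a one-dimensional problem on a transversal section, and then to choose perturbations by localization. I would work on the segment $\Sigma$ near $O$ and place all perturbations in a small neighbourhood of $O$. The upper flow along the hyperbolic cycle then gives a smooth global map $P^+$ from a point $(x,0)$ that leaves $\Sigma$ upward back to $\Sigma$ near $O$. Its derivative at the grazing point is governed by $e^{\sigma}$. Near $O$ the upper vector field is perturbed with a functional perturbation of the form $g^+(x,0)\mapsto x^{2k+1}\prod$-type polynomial $\Phi(x,\mu)$. I would choose $\Phi$ with $2k+1$ prescribed simple zeros $x_1<\dots<x_{2k+1}$, which alternate visible and invisible as recalled from \cite{CF25}. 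The lower subsystem has a fold at $O$, and I would shift it to a prescribed position $x_0$ by a small translation. Because of the localization, the global part of the return map is just a near-identity (up to the factor $e^{\sigma}$) smooth map composed with the local passage. The local passage, in turn, is a composition of folds of known parity. This lets me write the full return map on $\Sigma$ as a piecewise function whose pieces are explicit to leading order in the small parameters.

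For (a), in Figure~\ref{Fig-Gra}(c) the lower fold has $f^-g^-_x>0$, and lower orbits starting on one side of $x_0$ return to $\Sigma$ on the other side. Crossing limit cycles are fixed points of the composition $P^-\circ P^+$. Here $P^-$ is the lower fold involution $x\mapsto -x+O(x^2)$ about $x_0$, and $P^+$ is the upper return from an invisible or visible interval. On each crossing interval between consecutive upper tangent points, I would compute the displacement $d(x)=P^+(x)-(P^-)^{-1}(x)$ to leading order. Its sign at the endpoints is determined by which tangent point the endpoint is, so each suitable interval carries a sign change and hence a crossing cycle. Counting the admissible intervals under the constraint that the lower fold sits in one of them gives $1+\lfloor k/2\rfloor$ guaranteed sign changes, with $2+2\lfloor k/2\rfloor$ total candidate slots. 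Moving the parameter $x_0$ or the relative spacing converts one slot at a time from a crossing cycle into a sliding loop. A slot becomes a sliding loop when the orbit from a visible tangent point hits a sliding segment instead of a crossing interval. This should yield \eqref{Res} for each $\ell$. The standard limit cycle comes from the persistence of the hyperbolic cycle, shifted slightly into $\Sigma^+$ by adding a constant $\varepsilon$ to $g^+$ near $O$ of the appropriate sign.

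For (b), with $f^-g^-_x<0$ the lower orbits near $O$ are directed so that sliding appears. I would build the sliding loop as in Theorem~\ref{thm1}. I arrange the $2k+1$ upper tangent points, choose which $\ell$ visible ones the upper orbit touches (the others being shifted slightly below $\Sigma$ by bounded-sign adjustments of $\Phi$), and ensure the final piece lands in a sliding segment whose sliding flow leads back to the first tangent point. The standard cycle again comes from the hyperbolic cycle pushed off $\Sigma$.

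The main obstacle is the sign and counting analysis in (a): proving that the displacement has the claimed number of zeros on each interval. This requires leading-order expansions of the upper passage near a perturbed multiplicity-$(2k+1)$ tangency, uniform in the small parameters. I would first try scaling $x=\varepsilon X$ so that $\Phi$ becomes an $\varepsilon$-independent polynomial. The local upper passage then becomes the map identifying points of equal $\int\Phi$, that is, equal levels of the antiderivative polynomial. This reduces the sign checks to root configurations of an explicit polynomial, with the global factor $e^{\sigma}$ entering only at higher order.
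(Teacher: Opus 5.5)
\textbf{There is a genuine gap in part (a).} Your framework is the same one the paper uses: localize the perturbation near $O$, then factor the return map into a local upper passage, a global transition along the hyperbolic cycle, and the lower fold involution $P^-$. What is missing is the mechanism that produces the numbers in \eqref{Res}.

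The sentence ``counting the admissible intervals \dots\ gives $1+\lfloor k/2\rfloor$ guaranteed sign changes, with $2+2\lfloor k/2\rfloor$ total candidate slots'' is the content of the theorem. You assert it rather than derive it, and the endpoint-sign argument meant to support it fails. Write $F(x)=\int_0^x g^+(s,0)/f^+(s,0)\,ds$ for the local level function (your antiderivative of $\Phi$); the upper flow moves leftward at $O$.
\begin{itemize}
\item An orbit leaving $\Sigma$ upward at $x$ makes a global excursion only if $F(x)$ lies below every local minimum of $F$ to its left. Otherwise it falls back into a sliding segment.
\item To leading order it lands at the largest root of $F=e^{\sigma}F(x)+\Delta$, where $\Delta$ is the net level shift created by the perturbation.
\item Hence the sign of the displacement at an endpoint depends on the relative heights of all visible tangencies, on $\Delta$, $e^{\sigma}$ and $x_0$. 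It is not determined by the type of the endpoint.
\item The return map also jumps at the levels of the visible tangent orbits. So it is neither defined nor continuous on a whole crossing interval, and no intermediate value argument is available there.
\end{itemize}
Your last paragraph would additionally destroy the sign information: $e^{\sigma}$ does \emph{not} enter at higher order. Heights in the window are $O(\delta^{2k+2})$, and the global transition multiplies them by $e^{\sigma}+o(1)$, so $e^{\sigma}$ is a leading-order factor. If you drop it, the leading coefficient of the displacement at the degenerate loops becomes $1-1+o(1)$, and its sign is undetermined.

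\textbf{The missing idea is the paper's pairing construction.}
\begin{itemize}
\item Keep the lower fold at $O$ and place the upper tangent points so that the visible ones come in pairs exchanged by $P^-$. For $k=1$ take $\lambda^+_3=\delta$ and $\lambda^+_1=\widehat\delta$, the image of $\delta$ under $P^-$. In general the pairs are nested, with one extra central visible point coinciding with the lower fold when $k$ is even.
\item Using the bumps $\psi^+$ of Lemma~\ref{lm-TP}, make the upper tangent orbit through the left member of each pair return exactly through the right member. This gives nested critical loops $L^{cri}_i$ through two $(1,0)$ tangent points, and for even $k$ a grazing loop $L^{gra}_*$ through a $(1,1)$ point.
\item By Lemma~\ref{lm-TM} the displacement at $L^{cri}_i$ is $(V^+_{1,2}-V^+_{2,2})(x-\lambda^+_1)^2+\cdots$. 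Its coefficient is nonzero precisely because $e^{\sigma}\neq1$, while $g^+_x(\lambda^+_3,0)/g^+_x(\lambda^+_1,0)\to1$.
\item A small lower bump $\psi^-$ plus a readjusted $l^+_{i,3}$ produces a loop through a single tangent point of opposite stability. The opposite return inequalities then give a crossing cycle between the two loops. A further small change gives either a second crossing cycle or a sliding loop.
\item $L^{gra}_*$ is handled by \cite{Li20}, and the nested loops are unfolded one at a time.
\end{itemize}
Thus $1+\lfloor k/2\rfloor$ is the number of degenerate loops, and $2+2\lfloor k/2\rfloor$ counts two objects per loop. Without this structure or an equivalent one, your count has no justification.

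\textbf{On part (b).} The paper moves the lower fold away from the cluster, choosing $\lambda^-_1$ so that $-\phi^-(0,0)\lambda^-_1$ has the sign of $\sigma$, and then applies Theorem~\ref{thm1}. That sign choice, not ``pushing the cycle off $\Sigma$'', is what lets the standard cycle coexist with the sliding loop.
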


As mentioned in section 1, it is not hard to check that the results given in
\cite{Li20,Han13} correspond to the cases $k=0$ and agree with
\eqref{Res} for $k=0$. Thus, Theorem~\ref{thm2}(a) generalizes the number
of crossing limit cycles and sliding loops from the case $k=0$ to the case $k\ge1$ and
contributes the relationship between the number and the multiplicity of $O$.
We believe that there is still room for improvement in conclusion (a) of
Theorem~\ref{thm2}, specifically $\beta_c$ in \eqref{Res}.
Similar to Theorem~\ref{thm1}, the conclusion (b) of Theorem~\ref{thm2}
contributes the relationship between the number of tangent points on
the sliding loop bifurcating from $L_{II}$ of configuration {\rm Figure~\ref{Fig-Gra}(d)} and the multiplicity of
$O$. There is no crossing limit cycle bifurcating from $L_{II}$ of configuration {\rm Figure~\ref{Fig-Gra}(d)} because
there is no recurrence region under perturbations, which is the same situation as $L_I$.

\section{Analysis of the perturbation functions}
\setcounter{equation}{0}
\setcounter{lm}{0}
\setcounter{thm}{0}
\setcounter{rmk}{0}
\setcounter{df}{0}
\setcounter{cor}{0}

As indicated in section 2, the Theorem~\ref{thm1} and \ref{thm2} are
based on the existences of some specific perturbations of system~\eqref{pws1}.
In this section, constructions of the perturbations and some corresponding
properties are given.

Via the following two cut-off functions
\begin{equation*}
h^*(x,r_1,r_2):=\left\{
\begin{aligned}
&0,&&x\in(-\infty,r_1],\\
&\frac{e^{-\frac{1}{x-r_1}}}{e^{-\frac{1}{x-r_1}}+e^{\frac{1}{x-r_2}}},&&x\in(r_1,r_2),\\
&1,&&x\in[r_2,\infty),
\end{aligned}
\right.
\end{equation*}
and
\begin{equation*}
h_*(x,r_1,r_2):=\left\{
\begin{aligned}
&1,&&x\in(-\infty,r_1],\\
&\frac{e^{\frac{1}{x-r_2}}}{e^{-\frac{1}{x-r_1}}+e^{\frac{1}{x-r_2}}},&&x\in(r_1,r_2),\\
&0,&&x\in[r_2,\infty),
\end{aligned}
\right.
\end{equation*}
we define
\begin{equation}
    H(x,{\boldsymbol l}):=\left\{
    \begin{aligned}
    &0,&&x\in(-\infty,l_1]\cup(l_2,\infty),\\
    &l_3\int_{l_1}^{x}h^*(s,l_1,l_1+d),&&x\in(l_1,l_1+d],\\
    &l_3\left(\frac{d}{2}+\int_{l_1+d}^{x}h_*(s,l_1+d,l_1+2d)ds\right),&&x\in(l_1+d,l_1+2d],\\
    &l_3\left(d-\int_{l_1+2d}^{x}h^*(s,l_1+2d,l_1+3d)ds\right),&&x\in(l_1+2d,l_1+3d],\\
    &l_3\left(\frac{d}{2}-\int_{l_1+3d}^{x}h_*(s,l_1+3d,l_2)ds\right),&&x\in(l_1+3d,l_2],\\
    \end{aligned}
    \right.
\label{df-H}
\end{equation}
where the vector ${\boldsymbol l}=(l_i)\in{\mathbb R}^3$ satisfying
$l_1<l_2$ and $d=(l_2-l_1)/4$.

\begin{prop}
For $H(x,{\boldsymbol l})$ defined in~\eqref{df-H}, the following statements hold.
\begin{enumerate}
\item[{\rm (a)}] $H(x,{\boldsymbol l})$ is $C^{\infty}$ with respect to $x$ for
$x\in{\mathbb R}$.
\item[{\rm (b)}] $H(x,{\boldsymbol l})$ and $\dot H(x,{\boldsymbol l})$ uniformly
converge to the zero function for $x\in{\mathbb R}$ as ${\boldsymbol l}\to {\boldsymbol 0}$.
\item[{\rm (c)}] $H(x,{\boldsymbol l})$ is integrable over ${\mathbb R}$ and
$$|l_3|d^2<\left|\int_{-\infty}^{\infty}H(x,{\boldsymbol l})dx\right|<4|l_3|d^2.$$
\end{enumerate}
\label{prop-1}
\end{prop}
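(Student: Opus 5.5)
The plan is to verify the three statements directly from the definition \eqref{df-H}, using two elementary facts about the cut-off functions. First, on $(r_1,r_2)$ one has $h^*(x,r_1,r_2)+h_*(x,r_1,r_2)=1$, and both functions take values in $(0,1)$ there. Second, $h^*$ and $h_*$ are $C^\infty$ on $\mathbb R$, with all derivatives of order $\ge1$ vanishing at $r_1$ and $r_2$. This is the standard flatness of $e^{-1/t}$ as $t\to0^+$, applied to the quotient whose denominator stays positive. Set $r_1=a$, $r_2=a+d$. The substitution $s\mapsto 2a+d-s$ sends $-\frac{1}{s-a}$ to $\frac{1}{s-(a+d)}$, which gives $h^*(2a+d-s)=h_*(s)$. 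Combining this with $h^*+h_*=1$ yields $\int_a^{a+d}h^*=\int_a^{a+d}h_*=d/2$. This identity is what makes the constants $d/2$ and $d$ in \eqref{df-H} match across the breakpoints, so that $H$ is continuous.

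For (a), I will differentiate piecewise. On the successive intervals $(l_1,l_1+d)$, $(l_1+d,l_1+2d)$, $(l_1+2d,l_1+3d)$ and $(l_1+3d,l_2)$, the derivative $\dot H$ equals $l_3h^*$, $l_3h_*$, $-l_3h^*$ and $-l_3h_*$ respectively, and $\dot H=0$ outside $(l_1,l_2)$. At the breakpoints $l_1,\,l_1+d,\,l_1+2d,\,l_1+3d,\,l_2$ the one-sided values of $\dot H$ are $0,\ l_3,\ 0,\ -l_3,\ 0$ from both sides. All higher one-sided derivatives of $\dot H$ vanish at every breakpoint by the flatness of the cut-offs. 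Together with the continuity of $H$ from the integral identity above, this shows that $H$ is $C^\infty$. The main care is needed here, at the breakpoints, but it reduces entirely to the flatness property of $e^{-1/t}$.

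For (b), the formulas give $0\le H/l_3\le d$ and $|\dot H|\le|l_3|$, because $h^*,h_*\in[0,1]$. Since $d=(l_2-l_1)/4\to0$ and $l_3\to0$ as $\boldsymbol l\to\boldsymbol 0$, both bounds tend to zero uniformly in $x$. For (c), $H$ is continuous with compact support $[l_1,l_2]$, hence integrable. It has the constant sign of $l_3$, and $|H|\le|l_3|d$ on an interval of length $4d$, with strict inequality away from $x=l_1+2d$; this gives the upper bound $4|l_3|d^2$. For the lower bound, $|H|\ge|l_3|d/2$ on $[l_1+d,l_1+3d]$ with strict inequality in the interior, and $|H|>0$ on the remaining parts of $(l_1,l_2)$. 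Alternatively, the reflection symmetry $H(2l_1+4d-x)=H(x)$ together with the identity above should give the exact value $\int H=2l_3d^2$, which lies strictly between the two bounds. I would present the crude bounds and note the exact value if convenient.
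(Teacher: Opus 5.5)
Your proposal is correct and follows essentially the paper's route: the reflection substitution $s\mapsto 2a+d-s$ gives $\int_a^{a+d}h^*=d/2$ and hence continuity at the breakpoints; matching one-sided derivatives, with all higher ones vanishing by flatness of the cut-offs, gives (a); the bounds $|H|\le|l_3|d$ and $|\dot H|\le|l_3|$ give (b); and an area comparison gives (c). The only difference is cosmetic: for the lower bound you use the rectangle $[l_1+d,l_1+3d]\times[0,|l_3|d/2]$ under the graph instead of the paper's triangle with apex $(l_1+2d,l_3d)$ (both have area $|l_3|d^2$, and yours is easier to justify), and, as in the paper, the strict inequalities in (c) tacitly require $l_3\neq0$.
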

\begin{proof}
As indicated in \cite{Lee18}, $h^*(x,r_1,r_2)$ and $h_*(x,r_1,r_2)$ are
$C^{\infty}$ over ${\mathbb R}$. Thus, $H(x,{\boldsymbol l})$ is
$C^{\infty}$ for $x\in(-\infty,l_1)\cup(l_2,\infty)\bigcup_{i=1}^{4}(l_1+(i-1)d,l_1+id]$.
It is not hard to obtain that
\begin{equation*}
\begin{aligned}
H(l_1+d,{\boldsymbol l})
& = l_3\int_{l_1}^{l_1+d}h(x,l_1,l_1+d)dx\\
& = l_3\int_{l_1}^{l_1+d}\frac{e^{-\frac{1}{x-l_1}}+e^{\frac{1}{x-l_1-d}}-e^{\frac{1}{x-l_1-d}}}{e^{-\frac{1}{x-l_1}}+e^{\frac{1}{x-l_1-d}}}dx\\
& = l_3d-l_3\int_{l_1}^{l_1+d}\frac{e^{\frac{1}{x-l_1-d}}}{e^{-\frac{1}{x-l_1}}+e^{\frac{1}{x-l_1-d}}}dx\\
& = l_3d+l_3\int_{l_1+d}^{l_1}\frac{e^{-\frac{1}{y-l_1}}}{e^{\frac{1}{y-l_1-d}}+e^{-\frac{1}{y-l_1}}}dy\\
& = l_3d-l_3\int_{l_1}^{l_1+d}h(x,l_1,l_1+d)dx\\
& = \frac{l_3d}{2},
\end{aligned}
\end{equation*}
where $y=2l_1+d-x$. Since $H(x,{\boldsymbol l})\to l_3d/2$ as $x\to l_1+d$
for $x\in(l_1+d,l_1+2d)$, $H(x,{\boldsymbol l})$ is continuous at $x=l_1+d$.
Further, it is not difficult to check that
\begin{equation*}
\frac{d H(x,{\boldsymbol l})}{dx}\to l_3,~~~~~~\frac{d^i H(x,{\boldsymbol l})}{dx^i}\to 0~(i=2,...)
\end{equation*}
as $x\to l_1+d$, which implies that $H(x,{\boldsymbol l})$ is $C^{\infty}$ at
$x=l_1+d$. It can be proved by a similar way that $H(x,{\boldsymbol l})$
is $C^{\infty}$ at $x=l_1+id$ ($i=2,3,4$), which finishes conclusion (a).

The uniform convergence can be proved by
$|H(x,{\boldsymbol l})|\le |l_3|d$ and $|\dot H(x,{\boldsymbol l})|\le |l_3|$
for any $x\in{\mathbb R}$. The integrability of $H(x,{\boldsymbol l})$ can be proved by
the continuity. It is not hard to obtain that
$\int_{-\infty}^{\infty}H(x,{\boldsymbol l})dx=\int_{l_1}^{l_2}H(x,{\boldsymbol l})dx.$
For the cases that $l_3>0$, it can be proved by $H(x,{\boldsymbol l})\le l_3d$ that
$\int_{l_1}^{l_2}H(x,{\boldsymbol l})dx<(l_2-l_1)l_3d=4l_3d^2.$
Let $L_1$ (resp. $L_2$) denote the line segment connecting the point
$(l_1+2d,l_3d)$ and $(l_1+d,0)$ (resp. $(l_1+3d,0)$).
\begin{figure}[htp]
\centering
\includegraphics[scale=0.45]{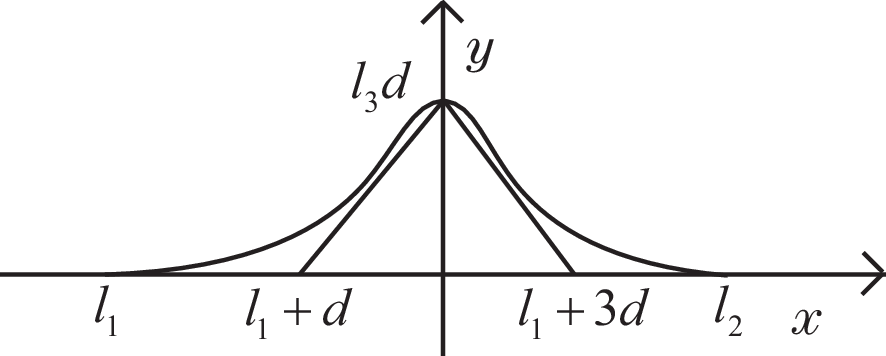}
\caption{The triangle lying below $H(x,{\boldsymbol l})$}
\label{Fig-Tri}
\end{figure}
There is a triangle
composed of $L_1$, $L_2$ and $\{(x,0):~x\in[l_1+d,l_1+3d]\}$ in the
interior of region surrounded by $\{(x,y):~x\in[l_1,l_2],y=H(x,{\boldsymbol l})\}$
and $\{(x,0):~x\in[l_1,l_2]\}$ as shown in Figure~\ref{Fig-Tri}.
This implies that
$l_3d^2<\int_{l_1}^{l_2}H(x,{\boldsymbol l})dx.$
The analysis of $l_3<0$ is similar and we omit the statements.
\end{proof}
Proposition \ref{prop-1}(b) implies that $H(x,{\boldsymbol l})$
can be defined as the zero function for ${\boldsymbol l}={\boldsymbol 0}$.
For some integer $s>0$, we define
\begin{equation}
\psi(x,{\boldsymbol l}):=\sum_{i=1}^{s}H_i(x,{\boldsymbol l}_i),
\label{df-psi}
\end{equation}
where ${\boldsymbol l}:=({\boldsymbol l}_1,...,{\boldsymbol l}_s)$ for
${\boldsymbol l}_i:=(l_{i,1},l_{i,2},l_{i,3})$ satisfying
$l_{i,1}<l_{i,2}$ and functions $H_i(x,{\boldsymbol l}_i)$ ($i=1,...,s$)
take the form \eqref{df-H}. It can be directly proved
by proposition~\ref{prop-1} that $\psi(x,{\boldsymbol l})$ is $C^{\infty}$ over
${\mathbb R}$ and that $\psi(x,{\boldsymbol l}), \dot\psi(x,{\boldsymbol l})$
uniformly converge to the zero function as ${\boldsymbol l}\to{\boldsymbol 0}$
because of the finite summation. An example of $\psi(x,{\boldsymbol l})$
are given in Figure~\ref{Fig-Psi}.
\begin{figure}[htp]
\centering
\includegraphics[scale=0.45]{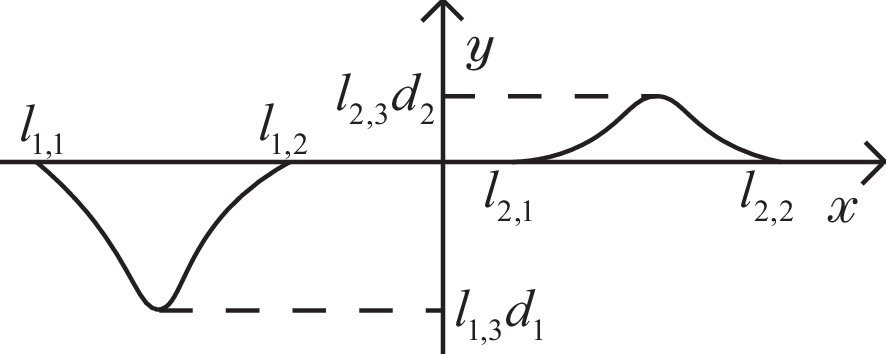}
\caption{$s=2$ and $l_{1,3}<0$, $l_{2,3}>0$}
\label{Fig-Psi}
\end{figure}

For PWS system~\eqref{pws1}, we define the following perturbation systems by
using $\psi(x,{\boldsymbol l})$
\begin{equation}
    \begin{aligned}
            \left(\begin{array}{c}
            \dot{x}\\
            \dot{y}\\
        \end{array}\right)=\left\{
        \begin{aligned}
    &       \left(\begin{array}{c}
                f^+(x,y)\\
                g^+(x,y)+\psi^+(x,{\boldsymbol l}^+)
            \end{array}\right)   &&\mathrm{if}~(x,y)\in\Sigma^+,\\
    &       \left(\begin{array}{c}
                f^-(x,y)\\
                g^-(x,y)+\psi^-(x,{\boldsymbol l}^-)
            \end{array}\right)   &&\mathrm{if}~(x,y)\in\Sigma^-,
        \end{aligned} \right.
    \end{aligned}
\label{pws1-exunfold}
\end{equation}
where $\psi^\pm(x,{\boldsymbol l}^\pm)$ take form of $\psi(x,{\boldsymbol l})$
in \eqref{df-psi}. In the following, some analysis of system~\eqref{pws1-exunfold}
is given to end this section. Consider two PWS systems of form \eqref{pws1}
and let $\chi(x,y)$ and $\widetilde\chi(x,y)$ denote the corresponding
piecewise-smooth vector fields. Further, the upper and lower
sub vector fields are written as follows.
\begin{equation*}
\begin{aligned}
&\chi^\pm(x,y):=\left(\chi^\pm_1(x,y),\chi^\pm_2(x,y)\right)^\top\in C^\infty\left(\mathbb{R}^2,\mathbb{R}^2\right),\\
&\widetilde\chi^\pm(x,y):=\left(\widetilde\chi^\pm_1(x,y),\widetilde\chi^\pm_2(x,y)\right)^\top\in C^\infty\left(\mathbb{R}^2,\mathbb{R}^2\right).
\end{aligned}
\end{equation*}
The distance of two PWS systems with $\chi(x,y)$ and $\widetilde\chi(x,y)$
are defined as
\begin{eqnarray}
\rho\left(\chi,\widetilde\chi\right):=d\left(\chi^+_1,\widetilde\chi^+_1\right)+d\left(\chi^+_2,\widetilde\chi^+_2\right)+d\left(\chi^-_1,\widetilde\chi^-_1\right)+d\left(\chi^-_2,\widetilde\chi^-_2\right),
\label{dist}
\end{eqnarray}
where
\begin{equation*}
d\left(X, Y\right):=\max_{(x,y)\in\bar{\cal U}}\left\{\sum_{i_1+i_2=0}^{1}\left|\frac{\partial^{i_1+i_2} (X-Y)}{\partial x^{i_1}\partial y^{i_2}}\right|\right\}
\end{equation*}
for $X(x,y),Y(x,y)\in C^\infty\left(\mathbb{R}^2,\mathbb{R}\right)$.

\begin{prop}
If $\psi^{\pm}(x,{\boldsymbol l}^{\pm})$ take the form of
$\psi(x,{\boldsymbol l})$ in \eqref{df-psi}, then for any $\epsilon>0$
there exists $\delta>0$ such that
\begin{equation*}
\rho({\cal Z},\widetilde {\cal Z})<\epsilon,~~~~\forall |{\boldsymbol l}^{\pm}|<\delta,
\end{equation*}
where ${\cal Z}(x,y)$ and $\widetilde {\cal Z}\left(x,y\right)$
denote the vector fields of system~\eqref{pws1} and \eqref{pws1-exunfold} respectively.
\label{prop-2}
\end{prop}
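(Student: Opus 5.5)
The plan is to compute $\rho({\cal Z},\widetilde{\cal Z})$ directly from the definition \eqref{dist}. The perturbation in \eqref{pws1-exunfold} only changes the second components. Hence the first-component terms vanish identically:
$$d(f^+,f^+)=d(f^-,f^-)=0.$$
Moreover, $\widetilde\chi^\pm_2-\chi^\pm_2=\psi^\pm(x,{\boldsymbol l}^\pm)$, which does not depend on $y$. Therefore the $y$-derivative term in $d(\cdot,\cdot)$ vanishes, and
$$\rho({\cal Z},\widetilde{\cal Z})=\sum_{\pm}\max_{(x,y)\in\bar{\cal U}}\Big(|\psi^\pm(x,{\boldsymbol l}^\pm)|+|\dot\psi^\pm(x,{\boldsymbol l}^\pm)|\Big)\le\sum_\pm\Big(\sup_{x\in\mathbb R}|\psi^\pm|+\sup_{x\in\mathbb R}|\dot\psi^\pm|\Big).$$

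The key step is to bound these suprema uniformly in terms of $|{\boldsymbol l}^\pm|$. This uses Proposition~\ref{prop-1}(b) and its quantitative form from the proof: $|H_i(x,{\boldsymbol l}_i)|\le|l_{i,3}|d_i$ and $|\dot H_i(x,{\boldsymbol l}_i)|\le|l_{i,3}|$. Here
$$d_i=(l_{i,2}-l_{i,1})/4\le(|l_{i,1}|+|l_{i,2}|)/4\le|{\boldsymbol l}_i|/2.$$
Summing over the finitely many $i=1,\dots,s$ in \eqref{df-psi} gives
$$\sup|\psi^\pm|+\sup|\dot\psi^\pm|\le s^\pm|{\boldsymbol l}^\pm|\Big(\tfrac{|{\boldsymbol l}^\pm|}{2}+1\Big).$$
Given $\epsilon>0$, we then choose $\delta\le1$ small enough that $2s\delta\cdot\tfrac32<\epsilon$ with $s=\max(s^+,s^-)$, and the claim follows. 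Blocks with ${\boldsymbol l}_i={\boldsymbol 0}$ cause no trouble, since by the remark after Proposition~\ref{prop-1} they are the zero function.

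The only point requiring care is checking the derivative bound $|\dot H|\le|l_3|$ on each piece of \eqref{df-H}. On every subinterval $\dot H$ equals $\pm l_3h^*$ or $\pm l_3h_*$, and $0\le h^*,h_*\le1$. On the complementary intervals $\dot H$ is zero. Since $H$ is $C^\infty$ by Proposition~\ref{prop-1}(a), these pointwise bounds hold everywhere on $\mathbb R$. Once this is verified, the rest is the routine estimate above.
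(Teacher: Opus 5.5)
Your proposal is correct and follows the paper's argument. You compute $\rho$ directly, observe that the first components and the $y$-derivative drop out, and bound $\rho=\sum_\pm\max_{\bar{\cal U}}\big(|\psi^\pm|+|\dot\psi^\pm|\big)\le\sum_\pm\sum_i\big(|l^\pm_{i,3}|d^\pm_i+|l^\pm_{i,3}|\big)$ using $|H|\le|l_3|d$ and $|\dot H|\le|l_3|$ from the proof of Proposition~\ref{prop-1}. Your only addition is to make the final step explicit by bounding $d_i\le|{\boldsymbol l}_i|/2$ and choosing $\delta$ concretely, where the paper simply concludes $\rho\to0$.
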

The proof of Proposition~\ref{prop-2} is not complicated and thus,
we only provide a brief proof. In fact, it is not hard to obtain that
$\psi^\pm(x,{\boldsymbol l}^\pm)$ and $\dot\psi^\pm(x,{\boldsymbol l}^\pm)$
uniformly converge to the zero function as ${\boldsymbol l}^\pm\to0$
because $\psi^\pm(x,{\boldsymbol l}^\pm)$ is the finite sum of
$H^\pm_i(x,{\boldsymbol l}^\pm_i)$ and Proposition~\ref{prop-2}(b) implies that
$H^\pm_i(x,{\boldsymbol l}^\pm_i)$ and $\dot H^\pm_i(x,{\boldsymbol l}^\pm_i)$
uniformly converge to the zero function as ${\boldsymbol l}^\pm_i\to0$.
Meanwhile, it can be obtained by the definition of $\rho$ in \eqref{dist}
that
\begin{equation*}
\begin{aligned}
\rho({\cal Z},\widetilde {\cal Z})
& = \max_{(x,y)\in \bar{\cal U}}\left\{\left|\psi^+(x,{\boldsymbol l}^+)\right|+\left|\dot\psi^+(x,{\boldsymbol l}^+)\right|\right\}+\max_{(x,y)\in \bar{\cal U}}\left\{\left|\psi^-(x,{\boldsymbol l}^-)\right|+\left|\dot\psi^-(x,{\boldsymbol l}^-)\right|\right\}\\
& \le \sum_{i=1}^{s^+}\left(\left|l^+_{i,3}d^+_{i}\right|+\left|l^+_{i,3}\right|\right)
+\sum_{i=1}^{s^-}\left(\left|l^-_{i,3}d^-_i\right|+\left|l^-_{i,3}\right|\right)
\end{aligned}
\end{equation*}
and further, $\rho({\cal Z},\widetilde{\cal Z})\to0$ as
${\boldsymbol l}^\pm\to{\boldsymbol 0}$.

The appearance of nonequivalent phase portraits under the small variation
of vector fields of PWS system~\eqref{pws1} is called {\it bifurcation},
where the ``nonequivalent'' is the opposite conception of
{\it $\Sigma$-equivalent} in \cite{Teixeira11} and the small variation is
under the meaning of distance $\rho$. Proposition~\ref{prop-2}
ensures that system~\eqref{pws1-exunfold} lies in a small neighborhood of
system~\eqref{pws1}, i.e., under the framework of bifurcation.

\section{Preliminary lemmas}
\setcounter{equation}{0}
\setcounter{lm}{0}
\setcounter{thm}{0}
\setcounter{rmk}{0}
\setcounter{df}{0}
\setcounter{cor}{0}

In order to prove Theorems~\ref{thm1} and \ref{thm2}, we need to analyze
dynamical behaviors of the tangent points of any multiplicity and
the limit cycles. In this section, we introduce the bifurcations of
tangent points and transition maps.

As discussions in section 1, the bifurcations of tangent points
are investigated in \cite{CF25} by the following lemma.
\begin{lm}
Assume that the origin $O$ is a visible tangent point of the upper subsystem of
\eqref{pws1} and $m^+\ge 1$. Then for each $\ell\in\left\{1,...,(m^++1)/2\right\}$,
there exist perturbations of the upper subsystem of \eqref{pws1} such that
there is an orbit exactly passing through $\ell$ visible tangent points.
\label{lm-TP}
\end{lm}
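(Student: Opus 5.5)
The plan is to work locally near $O$ and use the functional perturbation \eqref{pws1-exunfold} with $\psi^-\equiv0$. We take $\psi^+(x,{\boldsymbol l}^+)$ as a sum of bumps of the form \eqref{df-H}, supported on a small interval around $x=0$. First we normalize. Since $O$ is visible of odd multiplicity $m^+=2k+1$, we may assume $g^+(x,0)=a x^{m^+}(1+O(x))$ with $f^+(0,0)\neq 0$; say $f^+(0,0)>0$ and $a<0$, the other cases being symmetric. Flowing along the upper vector field, the tangent orbit through $O$ is then a graph $y=\phi(x)$ with $\phi(x)=c\,x^{m^++1}(1+O(x))$, $c<0$, and it touches $\Sigma$ only at $O$.

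The key mechanism is the following. After perturbation, the tangent points are the zeros of $G(x):=g^+(x,0)+\psi^+(x,{\boldsymbol l}^+)$. Along $\Sigma$, the vertical displacement of the perturbed orbit starting at a tangent point $x_1$ is governed, to leading order, by
\begin{equation*}
D(x_1,x):=\int_{x_1}^{x}\frac{G(s)}{f^+(s,0)}\,ds
\end{equation*}
plus higher-order terms. We therefore want to choose $\psi^+$ so that $G$ has exactly $m^+$ simple zeros $x_1<\dots<x_{m^+}$ near $0$. The odd-indexed zeros are then visible, the even-indexed ones invisible, and the signs alternate as in \cite{CF25}. In addition we want
\begin{equation*}
\int_{x_1}^{x_{2j+1}}\frac{G}{f^+}\,ds=0 \quad (j=1,\dots,\ell-1),
\end{equation*}
so that the orbit leaving the visible point $x_1$ returns tangentially to $\Sigma$ at $x_3,\dots,x_{2\ell-1}$. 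For the remaining odd indices we want this integral to be nonzero, so that the orbit misses those tangencies.

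The construction is as follows. Start from a perturbation $\lambda\,p(x)$, where $p$ is a polynomial with $m^+$ simple zeros, realized through the bumps $H_i$. Then use the free heights $l_{i,3}$: each bump changes the integral between consecutive zeros by an amount of order $l_{i,3}d_i^2$, by Proposition~\ref{prop-1}(c), while leaving the zeros unchanged if the bump is supported strictly between two zeros and is small. This lets us adjust the $\ell-1$ lobe integrals independently. The integrals depend continuously and monotonically on the heights, so an intermediate value argument produces exact zeros. Proposition~\ref{prop-2} guarantees that the perturbation is arbitrarily small in the $\rho$ metric.

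The main obstacle is making the leading-order integral condition exact. The true orbit deviates from the $\Sigma$-level approximation by terms of order $y\cdot\partial_y g^+$, and $y$ along the orbit is itself small. I would handle this by writing the orbit as a graph $y=Y(x,{\boldsymbol l}^+)$, which satisfies $Y'=(g^++\psi^+)(x,Y)/f^+(x,Y)$. Since the tangency conditions are $Y(x_{2j+1})=0$ and $Y'(x_{2j+1})=0$, and the second follows automatically because $x_{2j+1}$ is a zero of $G$, we can solve $Y(x_{2j+1})=0$ by a continuity and intermediate value argument in the heights $l_{i,3}$. For this we need that each height moves $Y(x_{2j+1})$ monotonically, with a derivative bounded below by a constant times $d_i^2$, and that this dominates the error terms. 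Finally, we check that the orbit stays in $\Sigma^+$ strictly between the chosen tangencies and near $x_{2j}$, and this follows from the sign pattern of $G$.
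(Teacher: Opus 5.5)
Your mechanism is the paper's. You split $O$ into $m^+$ simple tangent points and place bumps of the form \eqref{df-H} on the intervals between consecutive tangent points, so they vanish there and do not move them. You then fix the heights $l_{i,3}$ one at a time, using that the orbit height at the next tangent point depends on $l_{i,3}$ with derivative of exact order $\delta^2$. In the paper this is $a(\lambda^+_1)=N(\delta)\delta^2$ with $N(0)\neq0$; in your version it is $d_i^2$ from Proposition~\ref{prop-1}(c).

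The real difference is the reference orbit. The paper follows the orbit through $(0,\delta^{m^+})$ and proves \eqref{Theta-Esti}: that orbit stays at height $\delta^{m^+}(1+o(1))$ across the window, while the unfolded lobes contribute only $O(\delta^{m^++1})$. A bump with $l_{i,3}=O(\delta^{m^+-2})$ pulls it down onto a chosen visible point, and the next bump restores $\Theta$. Every unselected visible point is therefore missed by a margin of order $\delta^{m^+}$, so ``exactly $\ell$'' comes for free.

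You ride the tangent orbit from $x_1$ instead. This makes the first tangency free, but the unperturbed lobe areas are then of the same order $\delta^{m^++1}$ as your corrections, so you must genuinely control them. For equally spaced zeros, the leading-order value of $Y(x_{m^+})$ is $0$ by symmetry, so the misses at unselected points must also be produced by bumps. You also need $Y(x_{2j+1})>0$, not merely $\neq0$, if the orbit is to stay in $\overline{\Sigma^+}$. With that bookkeeping your route works.

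Two steps need correcting.

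First, your normalization contradicts visibility. With $f^+(0,0)>0$ and $a<0$ you get $c=a/\big((m^++1)f^+(0,0)\big)<0$, so the tangent orbit lies in $y\le0$ and $O$ is invisible. Likewise $G>0$ left of $x_1$ gives $G'(x_1)<0$, which would make the odd-indexed zeros invisible. You need $af^+(0,0)>0$.

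Second, the desingularization as written fails.
\begin{itemize}
\item Adding $\lambda p(x)$ with $p$ fixed to $ax^{m^+}(1+O(x))$ does not produce enough zeros. By the Newton polygon, the zeros tending to $0$ are a zero of order $j_0=\mathrm{ord}_0\,p$ at $0$ plus the roots of $ax^{m^+-j_0}\approx-\lambda p_{j_0}$. That gives at most three simple real zeros, too few for $m^+\ge5$.
\item The bumps \eqref{df-H} cannot do this job either. They are flat at the endpoints of their supports, so any bump whose support does not contain $0$ in its interior leaves the $m^+$-fold zero at $0$ untouched.
\end{itemize}
You need an $m^+$-parameter unfolding at a common small scale, such as the paper's ${\cal Q}^+$ in \eqref{fun-Q}. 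This replaces $\phi^+(x,y)x^{m^+}$ by $\phi^+(x,y)\prod_{i=1}^{m^+}(x-\lambda^+_i)$ with $\lambda^+_i=i\delta$. This term is not of the form $\psi^+$, so you must leave the class \eqref{pws1-exunfold}, which the lemma allows. Its smallness must be checked separately, which is immediate as $\delta\to0$.
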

This lemma is proved in \cite{CF25} by investigating the following
functional perturbation of the upper subsystem
\begin{equation*}
        \left( \begin{array}{c}
        \dot{x}\\
        \dot{y}\\
    \end{array} \right)
    =
        \left( \begin{array}{c}
            f^+(x,y+\zeta(x))\\
            \phi^+(x,y+\zeta(x))\prod_{i=1}^{m^+}(x-\lambda^+_i)+\Upsilon^+(x,y+\zeta(x))-\dot\zeta f^+(x,y+\zeta(x))
        \end{array} \right),
\end{equation*}
where $\zeta(x)\in C^{\infty}(\mathbb R)$. Here the parameter
${\boldsymbol \lambda}^+:=(\lambda^+_i)$ ($i=1,...,m^+$)
is introduced to desingularize the tangent point $O$, causing it to split
into several distinct tangent points of multiplicity $1$. And the function
$\zeta(x)$ is introduced to control the relative positions between
tangent orbits and $\Sigma$. Then Lemma~\ref{lm-TP} is proved by taking
appropriate ${\boldsymbol \lambda}^+$ and $\zeta(x)$. But this unfolding
system can not determine the precise positions where the tangent orbit
is tangent with $\Sigma$. This leads to that for the bifurcations of
grazing loops, the local recurrence obtained by this way is not
clear enough to analyze the Poincar\'e return map.
In this paper, a new functional perturbation is proposed to address
these deficiencies.
\begin{proof}[Proof of Lemma~\ref{lm-TP}]
It is not difficult to check that the upper subsystem of \eqref{pws1} take form
\begin{equation}
        \left( \begin{array}{c}
        \dot{x}\\
        \dot{y}\\
    \end{array} \right)
    =
       \left( \begin{array}{c}
            f^+(x,y)\\
            \phi^+(x,y)x^{m^+}+\Upsilon^+(x,y)
        \end{array} \right),
    \label{pws2-up}
\end{equation}
where $m^+\ge 1$, $f^+(0,0)\ne 0, \phi^+(0,0)\ne 0$ and
$\Upsilon^+(x,0)\equiv 0$. Then we consider the functional perturbation
system of \eqref{pws2-up} as follows.
\begin{equation}
        \left( \begin{array}{c}
        \dot{x}\\
        \dot{y}\\
    \end{array} \right)
    =
        \left( \begin{array}{c}
            f^+(x,y)\\
            \phi^+(x,y)x^{m^+}+\Upsilon^+(x,y)+{\cal Q}^+(x,y),
        \end{array} \right)
\label{pws2-up-exunfold}
\end{equation}
where
\begin{equation}
{\cal Q}^+(x,y):=\phi^+(x,y)\prod_{i=1}^{m^+}(x-\lambda^+_i)-\phi^+(x,y)x^{m^+}+\psi^+(x,{\boldsymbol l}^+)
\label{fun-Q}
\end{equation}
and ${\boldsymbol \lambda}^+:=(\lambda^+_i)\in{\mathbb R}^{m^+}$ and $\psi^+(x,{\boldsymbol l}^+)$
takes form of $\psi(x,{\boldsymbol l})$ in \eqref{df-psi}. It is not
hard to check that system~\eqref{pws2-up-exunfold} reads
\begin{equation*}
        \left( \begin{array}{c}
        \dot{x}\\
        \dot{y}\\
    \end{array} \right)
    =
        \left( \begin{array}{c}
            f^+(x,y)\\
            \phi^+(x,y)\prod_{i=1}^{m^+}(x-\lambda^+_i)+\Upsilon^+(x,y)+\psi^+(x,{\boldsymbol l}^+)
        \end{array} \right)
\end{equation*}
by substituting ${\cal Q}^+(x,y)$ into the system~\eqref{pws2-up-exunfold}.

The cases that $m^+=1$ can be proved by taking any $\lambda^+_1\ne0$
and ${\boldsymbol l}^+={\boldsymbol 0}$ in system~\eqref{pws2-up-exunfold}.
For the cases that $m^+\ge3$, we first take
$\lambda^+_i=i\delta$ and ${\boldsymbol l}^+={\boldsymbol 0}$
for sufficiently small $\delta>0$. Thus, there are exactly $m^+$ tangent points
$(\lambda^+_i,0)$ of multiplicities $m^+_i=1$ and $(\lambda^+_{2i-1},0)$
($i=1,...,(m^++1)/2$) are visible. Let $\Theta(x)$ denote the solution of
the Cauchy problem
\begin{equation*}
\frac{dy}{dx}=\frac{\phi^+(x,y)\prod_{i=1}^{m^+}(x-\lambda^+_i)+\Upsilon^+(x,y)}{f^+(x,y)},~~~~
y\left(0\right)=\delta^{m^+}.
\end{equation*}
For all $x\in[0, (m^++1)\delta]$,
\begin{equation}
    \Theta(x) = \delta^{m^+}+\int_{0}^{x}\frac{\phi^+(s,\Theta(s))\prod_{i=1}^{m^+}(s-\lambda^+_i)}{f^+(s,\Theta(s))}ds+\int_{0}^{x}\frac{\Upsilon^+(s,\Theta(s))}{f^+(s,\Theta(s))}ds.
\label{Theta-Esti-1}
\end{equation}
Consider the second term of the right-hand side of \eqref{Theta-Esti-1},
we can obtain that for sufficiently small $\delta>0$,
\begin{equation}
\begin{aligned}
\int_{0}^{x}\frac{\phi^+(s,\Theta(s))\prod_{i=1}^{m^+}(s-\lambda^+_i)}{f^+(s,\Theta(s))}ds
& = x\frac{\phi^+(s_0,\Theta(s_0))\prod_{i=1}^{m^+}(s_0-\lambda^+_i)}{f^+(s_0,\Theta(s_0))}\\
& = \delta^{m^++1}\frac{x^*\phi^+\left(s^*_0\delta,\Theta(s^*_0\delta)\right)}{f^+(s^*_0\delta,\Theta(s^*_0\delta))}\prod_{i=1}^{m^+}(s^*_0-i)\\
& = o(\delta^{m^+}),
\end{aligned}
\label{Theta-Esti-1-1}
\end{equation}
where $x^*:=x/\delta\in[0,(m^++1)]$ and $s^*_0:=s_0/\delta$.
By a similar way, we can obtain that
\begin{equation}
\begin{aligned}
\int_{0}^{x}\frac{\Upsilon^+(s,\Theta(s))}{f^+(s,\Theta(s))}ds
& = x\frac{\Upsilon^+(s_0,\Theta(s_0))}{f^+(s_0,\Theta(s_0))}\\
& = \delta\frac{x^*\widetilde \Upsilon^+(s^*_0\delta,\Theta(s^*_0\delta))}{f^+(s^*_0\delta,\Theta(s^*_0\delta))}\Theta(s^*_0\delta)\\
& = \delta\frac{x^*\widetilde \Upsilon^+(s^*_0\delta,\Theta(s^*_0\delta))}{f^+(s^*_0\delta,\Theta(s^*_0\delta))}\left\{\delta^{m^+} +\sum_{i=1}^{m^+} b_i(\tilde \delta)(s^*_0\delta)^i+o(\delta^{m^+})\right\}\\
& = o(\delta^{m^++1})+\frac{x^*\widetilde \Upsilon^+(s^*_0\delta,\Theta(s^*_0\delta))}{f^+(s^*_0\delta,\Theta(s^*_0\delta))}\left\{\sum_{i=1}^{m^+} b_i(\tilde \delta)(s^*_0\delta)^i\delta\right\},
\end{aligned}
\label{Theta-Esti-1-2}
\end{equation}
where $\tilde \delta=\delta^{m^+}$, $\widetilde\Upsilon^+(x,y):=\Upsilon^+(x,y)/y$ is $C^{\infty}$.
We take $x=(m^++1)\delta/2$ into \eqref{Theta-Esti-1} and
then by \eqref{Theta-Esti-1-1}, \eqref{Theta-Esti-1-2},
\begin{equation}
\sum_{i=1}^{m^+} b_i(\tilde \delta)\frac{(m^++1)^i\delta^i}{2^i}\!+\!o(\delta^{m^+})\!=\!
o(\delta^{m^+})\!+\!\frac{x^*\widetilde \Upsilon^+(s^*_0\delta,\Theta(s^*_0\delta))}{f^+(s^*_0\delta,\Theta(s^*_0\delta))}\!\left\{\sum_{i=1}^{m^+} b_i(\tilde \delta)(s^*_0\delta)^i\delta\!\right\},
\label{Theta-Esti-1-3}
\end{equation}
where $x^*=(m^++1)/2$ and $s^*\in [0,x^*]$.
In fact, $b_1(0)=...=b_{m^+}(0)=0$. The left of \eqref{Theta-Esti-1-3} is
$O(\delta)$ but the right one is $o(\delta)$
if $b_1(0)\ne 0$, i.e., a contradiction. Similarly,
we can prove that $b_2(0)=...=b_{m^+}(0)=0$ and obtain that
\begin{equation}
\Theta(x)=\delta^{m^+}+o(\delta^{m^+})>0
\label{Theta-Esti}
\end{equation}
over $[0,(m^++1)\delta]$.

Then we prove that there exists nonzero function $\psi^+(x,{\boldsymbol l}^+)$
such that there is an orbit passing through $\ell$ visible tangent points.
We begin with the cases that $m^+=3$ and corresponding $\ell\in\left\{1,2\right\}$.
The function $\psi^+(x,{\boldsymbol l}^+)$ is defined by taking $s^+=4$,
\begin{equation*}
\begin{aligned}
&l^+_{1,1}=0,~~~~~~l^+_{1,2}=\delta,&&l^+_{2,1}=\delta,~~~~~l^+_{2,2}=2\delta,\\
&l^+_{3,1}=2\delta,~~~~~l^+_{3,2}=3\delta,&&l^+_{4,1}=3\delta,~~~~l^+_{4,2}=4\delta
\end{aligned}
\end{equation*}
and $l_{i,3}$ ($i=1,...,4$) are undetermined.
Let $\widetilde\Theta(x)$ denote the solution of the following Cauchy problem
\begin{equation*}
\frac{dy}{dx}=\frac{\phi^+(x,y)\prod_{i=1}^{m^+}(x-\lambda^+_i)+\Upsilon^+(x,y)}{f^+(x,y)}+\frac{\psi^+(x,{\boldsymbol l}^+)}{f^+(x,y)},~~~~
y\left(0\right)=\delta^{m^+}.
\end{equation*}
Let $a(x):=d\widetilde\Theta(x)/dl^+_{1,3}$ and a straight computation
shows that for $x=\lambda^+_1$
\begin{equation*}
\begin{aligned}
a(\lambda^+_1)
& = \int_{0}^{\lambda^+_1}\frac{\phi^+_yf^+-f^+_y\phi^+}{(f^+)^2}\prod_{i=1}^{m^+}(s-\lambda^+_i)a(s)ds+\int_{0}^{\lambda^+_1}\frac{\Upsilon^+_yf^+-f^+_y\Upsilon^+}{(f^+)^2}a(s)ds\\
&~~~+\int_{0}^{\lambda^+_1}\frac{\psi^+_{l^+_{1,3}}f^+-f^+_y\psi^+a(s)}{(f^+)^2}ds.
\end{aligned}
\end{equation*}
Taking $l^+_{1,3}\to 0$ in the above equality, we get that there exist
$s_1, s_2\in(0,\lambda^+_1)=(0,\delta)$ such that
$
a(\lambda^+_1)=M_1(\delta)\delta^{m^++1}a(s_1)+M_2(\delta)\delta a(s_2)+M_3(\delta)\delta^2,
$
where $M_i(\delta)$ ($i=1,...,3$) are smooth with respect to $\delta$
and $M_1(0)\ne0$, $M_3(0)\ne0$, $M_2(0)=0$ (resp. $\ne0$) if
$\phi^+_y(0,0)=0$ (resp. $\ne0$). Thus, we obtain that there exists some
$N(\delta)$ satisfying $N(0)\ne0$ such that $a(\lambda^+_1)=N(\delta)\delta^2$
and consequently,
$$\widetilde\Theta(\lambda^+_1)=\Theta(\lambda^+_1)+N(\delta)\delta^2l^+_{1,3}+O((l^+_{1,3})^2).$$
Then by \eqref{Theta-Esti}, there is some $l^+_{1,3}=O(\delta^{m^+-2})$ such that
$\widetilde\Theta(\lambda^+_1)=0$, i.e., the orbit of the upper
subsystem of \eqref{pws2-up-exunfold} with the initial value $(0,\delta^{m^+})$
passes through the tangent point $(\lambda^+_1,0)$. Further, based on this
$l^+_{1,3}$ and over the interval $[\lambda^+_1,\lambda^+_2]$,
it can be proved by a similar way that there exists some $l^+_{2,3}$
such that $\widetilde\Theta(\lambda^+_2)=\Theta(\lambda^+_2)$.
Moreover, over the intervals $[\lambda^+_2,\lambda^+_3]$ and $[\lambda^+_3,4\delta]$,
there exists some $l^+_{3,3}$ and $l^+_{4,3}$ such that
$\widetilde\Theta(\lambda^+_3)=0$ and $\widetilde\Theta(4\delta)=\Theta(4\delta)$.
Analysis above implies that system~\eqref{pws2-up-exunfold} has an orbit passing
through the visible bifurcating tangent points $(\lambda^+_1,0)$ and $(\lambda^+_3,0)$
as shown in Figure~\ref{Fig-TPB}(a), which completes the proof of $\ell=2$.
\begin{figure}[h]
\centering
\subfigure[$\widetilde\Theta(\lambda^+_3)=0$]
 {
  \scalebox{0.4}[0.4]{
   \includegraphics{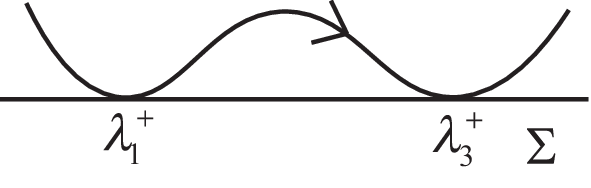}}}~~~
\subfigure[$\widetilde\Theta(\lambda^+_3)>0$]
 {
  \scalebox{0.4}[0.4]{
   \includegraphics{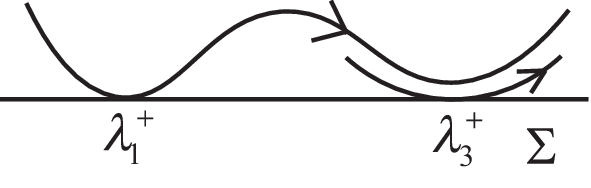}}}~~~
\subfigure[$\widetilde\Theta(\lambda^+_3)<0$]
 {
  \scalebox{0.4}[0.4]{
   \includegraphics{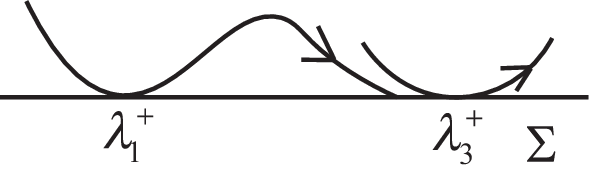}}}
   \caption{orbit of system \eqref{pws2-up-exunfold} passing through tangent points}
\label{Fig-TPB}
\end{figure}

The proof of $m^+\ge5$ is similar and we omit the statements.
\end{proof}
Similarly, one can check that the parameter ${\boldsymbol \lambda}^+$ is
introduced to desingularize the tangent point $O$, causing it to split
into several distinct tangent points of multiplicity $1$ at $(\lambda^+_i,0)$.
Meanwhile, it is also proved that by taking $\lambda^+_i=i\delta$ ($i=1,...,m^+$)
for small $\delta>0$, there are some orbits in the region 
$\left\{(x,y):~x\in(0,(m^++1)\delta),~y\in(0,O(\delta^{m^+}))\right\}$,
which allows us to define the function $\psi^+(x,{\boldsymbol l}^+)$ 
to control the relative positions between tangent orbits and $\Sigma$.
That is, there is a tangent orbit being tangent with $\Sigma$
exactly at some $(\lambda^+_i,0)$. It is the basic to define and analyze the
Poincar\'e return map near the grazing loop, which is shown in the next 
section. On the other hand, the form of functional perturbation
system~\eqref{pws2-up-exunfold} facilitates the analysis
of the relationship between local and global recurrences, which is
also shown in the next section.

In order to characterize the global recurrences, we introduce the
{\it transition map}. Consider the following smooth system
\begin{equation}
        \left( \begin{array}{c}
        \dot{x}\\
        \dot{y}\\
    \end{array} \right)
    =
    \left( \begin{array}{c}
            f(x,y)\\
            g(x,y)
    \end{array} \right)
    :={\cal Z}(x,y),
\label{sys-transition}
\end{equation}
where $(x,y)\in{\mathbb R}^2$ and $f, g\in C^{\infty}({\mathbb R}^2)$.
Let
$
\gamma(t,x_0,y_0):=\left(\gamma_1(t,x_0,y_0),\gamma_2(t,x_0,y_0)\right)^\top
$
denote the orbit of system \eqref{sys-transition} with the initial value $(x_0,y_0)$.
For a given $T>0$ and a point $(x_0,y_0)$ satisfying ${\cal Z}(x_0,y_0)\ne 0$,
denote the point $\gamma(T,x_0,y_0)$ as $(x_1,y_1)^\top$.
Then we take line segment $S_0$ and $S_1$ at $(x_0,y_0)$ and $(x_1,y_1)$, respectively.
Moreover, the line segment $S_1$ is required to be transversal to the orbits
of system~\eqref{sys-transition}. Further, for the point
$(\widetilde x_0,\widetilde y_0)\in S_0$ sufficiently close to $(x_0,y_0)$,
the orbit $\gamma(t,\widetilde x_0,\widetilde y_0)$ transversally intersects
$S_1$ at some point $(\widetilde x_1,\widetilde y_1)$ which is sufficiently closed
to $(x_1,y_1)$. Let $N_0:=(N_{01},N_{02})^\top$ and $N_1:=(N_{11},N_{12})^\top$
denote the unit vector parallel to $S_0$ and $S_1$ respectively,  and write
\begin{equation*}
(\widetilde x_0,\widetilde y_0)=(x_0+rN_{01},y_0+rN_{02}),~~~~~~~
(\widetilde x_1,\widetilde y_1)=(x_1+V(r)N_{11},y_1+V(r)N_{12})
\end{equation*}
for sufficiently small $r\ne0$ as shown in Figure~\ref{Fig-trans-1}(a).

\begin{figure}[h]
\centering
\subfigure[transversal case]
 {
  \scalebox{0.35}[0.35]{
   \includegraphics{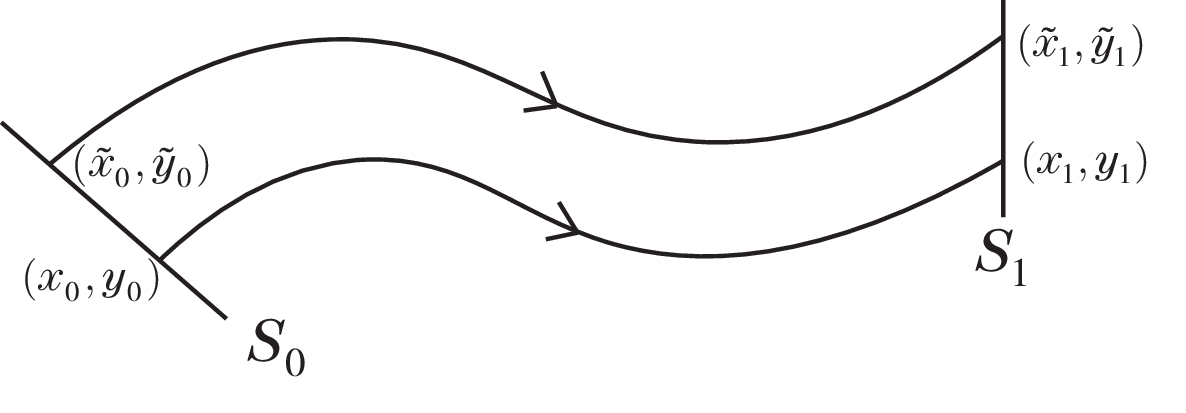}}}~
\subfigure[tangent case]
 {
  \scalebox{0.35}[0.35]{
   \includegraphics{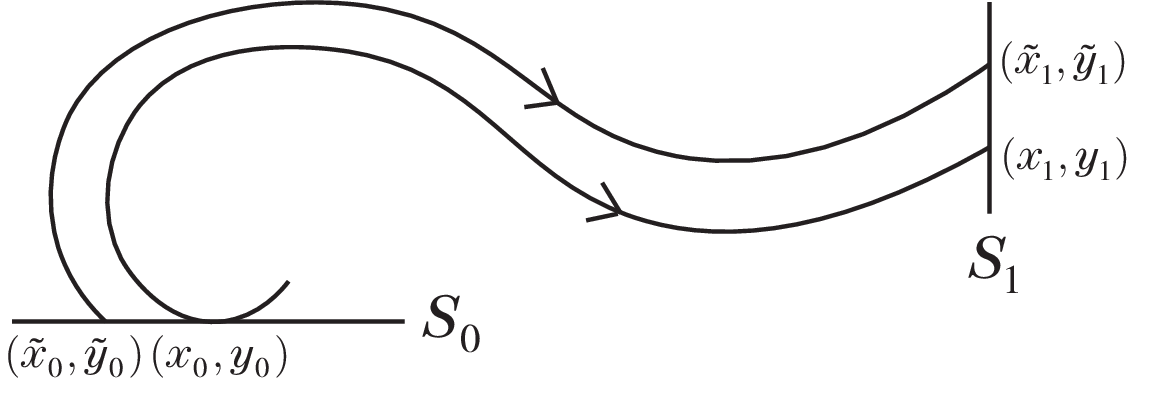}}}
   \caption{The transition map $V(r)$}
\label{Fig-trans-1}
\end{figure}

As indicated in \cite{ZhangJ98}, function $V(r)$ is called the transition map
of system~\eqref{sys-transition}. It is generally used to analyze limit cycles,
homoclinic loops and heteroclinic loops. The transition $V(r)$ is $C^\infty$
with respect to $r$ and for sufficiently small $r>0$
\begin{equation}
V(r)=V_1r+O(r^2),
\label{V-Expan}
\end{equation}
where
\begin{equation*}
V_1:=\frac{\Delta_0}{\Delta_1}\exp\left\{\int_0^{T}\frac{\partial f\left(\gamma\left(s,x_0,y_0\right)\right)}{\partial x}+\frac{\partial g\left(\gamma\left(s,x_0,y_0\right)\right)}{\partial y}ds\right\}\ne0
\end{equation*}
and
\begin{eqnarray}
\Delta_0:=\det\left({\cal Z}(x_0,y_0),N_0\right), ~~~~\Delta_1:=\det\left({\cal Z}(x_1,y_1),N_1\right).
\label{D0D1}
\end{eqnarray} (see, e.g., \cite{JKHaleBook,KuznetsovBook,ZhangJ98}).
It is not difficult to check that the transversality between
$S_1$ and orbits ensures the non-vanishing denominator $\Delta_1$ in \eqref{D0D1}.
Moreover, the transition map can be used to characterize a standard limit cycle
by taking $S_0=S_1$ and consequently, $V_1=\sigma$ because $\Delta_0=\Delta_1$.
Further, $\sigma<0$ (resp. $\sigma>0$) implies that limit cycle $L_I$ is stable (resp. unstable).

Finally, we consider the transition map for the cases
that orbit $\gamma(t,x_0,y_0)$ is tangent with $S_0$ as shown in
Figure~\ref{Fig-trans-1}(b). It is not difficult to check that $V_1=0$ because of
vanishing $\Delta_0$ and consequently, the transition map $V(r)$ starts at
least degree $2$. The following lemma characterizes the corresponding
Taylor expand, which is proved in \cite{Ponce15,Han13,Novaes18} for $m=1$ and
in \cite{Andrade23,Jeffrey19} for general $m$.

\begin{lm} Assume that orbit  $\gamma\left(t,x_0,y_0\right)$ of
system~\eqref{sys-transition} is tangent with horizontal line $S_0$ at $(x_0,y_0)$
and transversally intersects $S_1$ at $(x_1,y_1)$ at time $t=T$ as shown in
{\rm Figure~\ref{Fig-trans-1}(b)}. If there exists an integer $m\ge 1$ such that
\begin{equation*}
g(x_0,y_0)=\frac{\partial g}{\partial x}(x_0,y_0)=...=\frac{\partial^{m-1} g}{\partial x^{m-1}}(x_0,y_0)=0,~~\frac{\partial^{m} g}{\partial x^{m}}(x_0,y_0)\ne0,
\end{equation*}
then the transition map
$
V(r)=V_{m+1}r^{m+1}+O\left(r^{m+2}\right),
$
where
\begin{equation*}
V_{m+1}:=\frac{\partial^m g}{\partial x^m}(x_0,y_0)\frac{N^{m+1}_{01}}{\left(m+1\right)!\Delta_1}\exp\left\{\int_0^{T}\frac{\partial f\left(\gamma\left(s,x_0,y_0\right)\right)}{\partial x}
+\frac{\partial g\left(\gamma\left(s,x_0,y_0\right)\right)}{\partial y}ds\right\}\ne0
\end{equation*}
and $N_{01}$ is the first component of unit vector $N_0$ parallel to $S_0$ and
$\Delta_1$ is defined in \eqref{D0D1}.
\label{lm-TM}
\end{lm}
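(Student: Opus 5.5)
We prove Lemma~\ref{lm-TM} by a local computation near the tangency, composed with a regular transition map. We split the passage from $S_0$ to $S_1$ at an intermediate transversal section: first from $S_0$ to a short segment $\Pi$ through $(x_0,y_0)$ transversal to the flow, then along the flow from $\Pi$ to $S_1$. Since $g(x_0,y_0)=0$ and $(x_0,y_0)$ is not an equilibrium, we have $f(x_0,y_0)\ne0$. Hence we take $\Pi$ vertical, $\Pi=\{x=x_0\}$, and the orbit crosses it transversally. For the second step we apply \eqref{V-Expan} with $S_0$ replaced by $\Pi$, obtaining a map $\eta\mapsto V_1^\Pi\eta+O(\eta^2)$. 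Here $V_1^\Pi=\frac{\Delta_\Pi}{\Delta_1}\exp\{\int_0^T{\rm div}\}$, with $\Delta_\Pi=\det({\cal Z}(x_0,y_0),(0,1)^\top)=f(x_0,y_0)$ up to the orientation chosen on $\Pi$. The time from $\Pi$ to $S_1$ along the reference orbit is exactly $T$, because $\Pi$ passes through the initial point.

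For the first step we write $S_0$ as horizontal, so $N_0=(N_{01},0)$ with $N_{01}=\pm1$. Near $(x_0,y_0)$ the orbits are graphs $y=Y(x;\xi)$ solving $dy/dx=g/f$. We take the point $(x_0+rN_{01},y_0)\in S_0$ and flow it, backward or forward as dictated by the sign of $f$, to $x=x_0$. The displacement is $\eta(r)=Y(x_0)-y_0=-\int_{x_0}^{x_0+rN_{01}}\frac{g(s,Y(s))}{f(s,Y(s))}ds$. We expand $g(s,Y(s))$: by hypothesis $g(s,y_0)=\frac{1}{m!}\partial_x^m g(x_0,y_0)(s-x_0)^m+O(|s-x_0|^{m+1})$. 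Moreover $Y(s)-y_0$ is itself $O(|r|^{m+1})$ on the integration interval, so replacing $Y(s)$ by $y_0$ costs only $O(|r|^{m+1})\cdot O(|r|)=O(|r|^{m+2})$. This gives
$$\eta(r)=-\frac{\partial_x^m g(x_0,y_0)}{(m+1)!\,f(x_0,y_0)}(rN_{01})^{m+1}+O(r^{m+2}).$$
The a priori bound on $Y-y_0$ is obtained from a Gronwall-type bootstrap, following the same scaling idea used for $\Theta$ in the proof of Lemma~\ref{lm-TP}.

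Composing the two steps gives $V(r)=V_1^\Pi\eta(r)+O(\eta^2)=V_1^\Pi\eta(r)+O(r^{2m+2})$. The factor $f(x_0,y_0)$ in $\Delta_\Pi$ cancels the $f(x_0,y_0)$ in the denominator of $\eta$, which yields $V_{m+1}=\partial_x^m g\,N_{01}^{m+1}\exp\{\int_0^T{\rm div}\}/((m+1)!\Delta_1)$ up to a sign. Nonvanishing of $V_{m+1}$ follows from $\partial_x^m g(x_0,y_0)\ne0$, $N_{01}\neq0$ and $\Delta_1\ne0$. Note that the time from $S_0$ to $\Pi$ is $O(r)$, so its contribution to the divergence integral enters only at order $r^{m+2}$. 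Smoothness of $V$ is inherited from smooth dependence on initial data.

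The main obstacle is bookkeeping rather than ideas. We must fix orientations consistently: the sign of $f(x_0,y_0)$, the direction in which we flow from $S_0$ to $\Pi$, and the orientations of $N_0$ and $N_1$. Only with these fixed does the sign of $V_{m+1}$ match the stated formula rather than its negative. We will fix $\Pi$'s unit vector as $(0,1)^\top$ and track the signs explicitly. We will also check that the $O(r^{m+2})$ remainder is uniform, which follows from the integrand bound above.
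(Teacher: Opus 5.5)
Your decomposition is sound. Note that the paper gives no proof of this lemma; it only cites the literature, so there is no internal argument to compare against. The pieces of your argument check out:
\begin{itemize}
\item The local estimate $\eta(r)=-\frac{\partial_x^m g(x_0,y_0)}{(m+1)!\,f(x_0,y_0)}(rN_{01})^{m+1}+O(r^{m+2})$ is correct. Gronwall on an interval of length $|r|$ gives $|Y-y_0|=O(|r|^{m+1})$.
\item With unit vector $(0,1)^\top$ on $\Pi$ you get $\Delta_\Pi=f(x_0,y_0)$ exactly.
\item The composition error is $O(r^{2m+2})$.
\item Your remark about the $O(r)$ travel time entering the divergence integral is superfluous. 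The local piece is handled entirely by the explicit integral, and the divergence integral in $V_1^\Pi$ runs along the reference orbit for time exactly $T$.
\end{itemize}

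The step that fails is the last one, where you claim the sign can be made to agree with the stated $V_{m+1}$ by fixing orientations. No freedom is left to do this:
\begin{itemize}
\item $N_0$ and $N_1$ are given data, so $V(r)$ and $\Delta_1=\det({\cal Z}(x_1,y_1),N_1)$ are already determined.
\item Reversing the unit vector of $\Pi$ flips the signs of both $\Delta_\Pi$ and $\eta$, so the change cancels.
\item Whether you flow forward or backward from $S_0$ to $\Pi$ is irrelevant, because the map is defined orbitwise.
\end{itemize}
Your own computation therefore yields
\[
V_{m+1}=-\frac{\partial_x^m g(x_0,y_0)\,N_{01}^{m+1}}{(m+1)!\,\Delta_1}\exp\Big\{\int_0^T\Big(\frac{\partial f}{\partial x}+\frac{\partial g}{\partial y}\Big)(\gamma(s,x_0,y_0))\,ds\Big\},
\]
which is the negative of the stated coefficient, and this is the correct value.

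A direct check confirms it. Take $f\equiv 1$, $g=x$, $S_0=\{y=0\}$, $N_0=(1,0)$, $S_1=\{x=1\}$, $N_1=(0,1)$. The orbit through $(r,0)$ is $y=(x^2-r^2)/2$, so it meets $S_1$ at $y=\frac12-\frac{r^2}{2}$ and $V(r)=-r^2/2$. The formula in the statement gives $+r^2/2$, since here $\Delta_1=1$ and the divergence vanishes.

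So the lemma as written is off by a sign relative to its own definition \eqref{D0D1}. It is correct if $\Delta_1$ is read as $\det(N_1,{\cal Z}(x_1,y_1))$. You should finish by stating and proving the corrected formula rather than promising that sign bookkeeping will reproduce the stated one. The order $m+1$ and the nonvanishing $V_{m+1}\neq 0$ are unaffected.
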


\section{Proof of main results}
\setcounter{equation}{0}
\setcounter{lm}{0}
\setcounter{thm}{0}
\setcounter{rmk}{0}
\setcounter{df}{0}
\setcounter{cor}{0}

For the system~\eqref{pws1} satisfying assumptions {\bf A}$_1$ and {\bf A}$_2$,
it takes the following form
\begin{equation}
\begin{aligned}
            \left(\begin{array}{c}
            \dot{x}\\
            \dot{y}\\
        \end{array}\right)=\left\{
        \begin{aligned}
    &       \left(\begin{array}{c}
                f^+(x,y)\\
                \phi^+(x,y)x^{2k+1}+\Upsilon^+(x,y)
            \end{array}\right)   &&\mathrm{if~}(x,y)\in\Sigma^+,\\
    &       \left(\begin{array}{c}
                f^-(x,y)\\
                g^-(x,y)
            \end{array}\right)   &&\mathrm{if~}(x,y)\in\Sigma^-,
        \end{aligned} \right.
    \end{aligned}
\label{pws-thm1}
\end{equation}
where $k\in{\mathbb Z}^+$, $\phi^+(0,0)\ne0$ and $\Upsilon^+(x,0)\equiv0$.
The proof of Theorem~\ref{thm1} is given by analyzing the following
perturbation system
\begin{equation}
\begin{aligned}
            \left(\begin{array}{c}
            \dot{x}\\
            \dot{y}\\
        \end{array}\right)\!=\!\left\{
        \begin{aligned}
    &       \left(\begin{array}{c}
                f^+(x,y)\\
                \phi^+(x,y)x^{2k+1}+\Upsilon^+(x,y)+{\cal P}^+(x,y)
            \end{array}\right)   &&\mathrm{if}~(x,y)\in\Sigma^+,\\
    &       \left(\begin{array}{c}
                f^-(x,y)\\
                g^-(x,y)
            \end{array}\right)   &&\mathrm{if}~(x,y)\in\Sigma^-,
        \end{aligned} \right.
    \end{aligned}
\label{pws-thm1-pf}
\end{equation}
where
\begin{equation}
{\cal P}^+(x,y):=\left(\psi^+(x,{\boldsymbol l}^+)+\phi^+(x,y)\!\!\prod\limits_{i=1}^{2k+1}\!\!(x-\lambda^+_i)-\phi^+(x,y)x^{2k+1}\right)\nu(x)\mu(y)
\label{perturb-fun}
\end{equation}
for
${\boldsymbol \lambda}^+:=(\lambda^+_i)\in{\mathbb R}^{2k+1}$,
$\psi^+(x,{\boldsymbol l}^+)$ is of form $\psi(x,{\boldsymbol l})$
in \eqref{df-psi},
\begin{equation}
    \nu(x):=\left\{
    \begin{aligned}
    &0,&&x\in(-\infty,k_1]\cup[k_4,+\infty),\\
    &\frac{e^{-\frac{1}{x-k_1}}}{e^{-\frac{1}{x-k_1}}+e^{\frac{1}{x-k_2}}},&&x\in(k_1,k_2),\\
    &1,&&x\in[k_2,k_3],\\
    &\frac{e^{\frac{1}{x-k_3}}}{e^{-\frac{1}{x-k_3}}+e^{\frac{1}{x-k_4}}},&&x\in(k_3,k_4)\\
    \end{aligned}
    \right.
\label{df-nu}
\end{equation}
for some $k_1<k_2<k_3<k_4$ and
\begin{equation}
    \mu(y):=\left\{
    \begin{aligned}
    &1,&&y\in(-\infty,r_1],\\
    &\frac{e^{\frac{1}{y-r_2}}}{e^{-\frac{1}{y-r_1}}+e^{\frac{1}{y-r_2}}},&&y\in(r_1,r_2),\\
    &0,&&y\in[r_2,+\infty]\\
    \end{aligned}
    \right.
\label{df-mu}
\end{equation}
for some $r_1<r_2$. 
\begin{proof}[Proof of Theorem~\ref{thm1}]
We begin with the cases that $g^-(0,0)>0$ and $\sigma<0$, i.e., 
the grazing loop $L_{I}$ is of the configuration in Figure~\ref{Fig-Gra}(a) 
and is a stable limit cycle. The proof of $k=1$ proceeds the following
three steps.

{\it Step 1. Determine appropriate functions $\nu(x)$ and $\mu(y)$.}
In what follows, the function $\nu(x)$ is defined by taking
$k_1=-3\delta, k_2=-2\delta, k_3=2\delta, k_4=3\delta$
and the function $\mu(y)$ is defined by taking
$r_1=\delta, r_2=\delta+c$
for a sufficiently small $\delta>0$,
where $c$ is a positive constant such that the region
$\{(x,y):~x\in(-3\delta,3\delta),y\in(\delta,\delta+c)\}$ is in the
interior of $L_{I}$. According to the definitions \eqref{df-nu} and \eqref{df-mu},
it is not hard to check that in the region
$\{(x,y):~x\in(-2\delta,2\delta),y\in(0,\delta)\}$
\begin{equation*}
{\cal P}^+(x,y)=\phi^+(x,y)\prod\limits_{i=1}^{3}(x-\lambda^+_i)-\phi^+(x,y)x^{3}+\psi^+(x,{\boldsymbol l}^+),
\end{equation*}
i.e., the function ${\cal Q}^+(x,y)$ in \eqref{fun-Q} for $m^+=3$.
This implies that
$$\widetilde{\cal Z}^+_2(x,y)=\phi^+(x,y)\prod\limits_{i=1}^{3}(x-\lambda^+_i)+\Upsilon^+(x,y)+\psi^+(x,{\boldsymbol l}^+),$$
where $\widetilde{\cal Z}^+(x,y):=(\widetilde{\cal Z}^+_1(x,y),\widetilde{\cal Z}^+_2(x,y))^\top$
denotes the upper vector field of system~\eqref{pws-thm1-pf}.
On the other hand, outside the region $\Sigma_p:=\{(x,y):~x\in(-3\delta,3\delta),y\in(0,\delta+c)\}$,
we obtain that $\widetilde{\cal Z}^+(x,y)\equiv{\cal Z}^+(x,y)$
because ${\cal P}^+(x,y)\equiv0$, i.e., there is no perturbation. Here
where ${\cal Z}^+(x,y):=({\cal Z}^+_1(x,y),{\cal Z}^+_2(x,y))^\top$ denotes 
the upper vector field of system~\eqref{pws-thm1}.

{\it Step 2. Determine parameter ${\boldsymbol \lambda}^+$ in
system~\eqref{pws-thm1-pf} to desingularize tangent points.}
For system~\eqref{pws-thm1-pf} and sufficiently small $\delta>0$, we
first take
$\lambda^+_1=-\delta, \lambda^+_2=0, \lambda^+_3=\delta$
and $\psi^+(x,{\boldsymbol l}^+)\equiv0$, i.e., obtain a
system in region $\{(x,y):~x\in(-2\delta,2\delta),y\in(-\infty,\delta)\}$
\begin{equation}
\begin{aligned}
            \left(\begin{array}{c}
            \dot{x}\\
            \dot{y}\\
        \end{array}\right)=\left\{
        \begin{aligned}
    &       \left(\begin{array}{c}
                f^+(x,y)\\
                \phi^+(x,y)\prod\limits_{i=1}^{2k+1}(x-\lambda^+_i)+\Upsilon^+(x,y)
            \end{array}\right)   &&\mathrm{if}~(x,y)\in\Sigma^+,\\
    &       \left(\begin{array}{c}
                f^-(x,y)\\
                g^-(x,y)
            \end{array}\right)   &&\mathrm{if}~(x,y)\in\Sigma^-.
        \end{aligned} \right.
    \end{aligned}
\label{tpws-thm1-pf}
\end{equation}
System~\eqref{tpws-thm1-pf}, i.e., system~\eqref{pws-thm1-pf} with
$\psi^+(x,{\boldsymbol l}^+)\equiv0$, is called the {\it transitional system}
in what follows because it is used to establish some relationships
between system~\eqref{pws-thm1} and \eqref{pws-thm1-pf}.
It is not hard to check that $(\lambda^+_1,0)$ and $(\lambda^+_3,0)$ are
visible tangent points and $(\lambda^+_2,0)$ is an invisible tangent point.
Then we take two small vertical line segments
$S(c):=\{(c,y):~y\in(0,2\delta^3)\}$ for $c=\pm3\delta$
and further, the grazing loop $L_{I}$ of unperturbed system~\eqref{pws-thm1}
intersects $S(-3\delta)$ and $S(3\delta)$ at some point
$(-3\delta,{\cal Y}^*)$ and $(3\delta,{\cal Y}_*)$ respectively. Meanwhile,
it can be proved by the analysis in Lemma~\ref{lm-TP} that there
exist nonzero constants $M^*$ and $M_*$ such that 
${\cal Y}^*=M^*\delta^4, {\cal Y}_*=M_*\delta^4$.
Then for point $(-3\delta,\delta^3)$, the orbit $\gamma^+(t,-3\delta,\delta^3)$
sequentially intersects $S(3\delta)$ and $S(-3\delta)$ at some point
$(3\delta,{\cal Y}_1)$ and $(-3\delta,{\cal Y}_2)$ for forward direction
respectively. Via transition map and \eqref{V-Expan}, we obtain that
$${\cal Y}_2={\cal Y}^*+\exp\{\sigma\}(\delta^3-{\cal Y}^*)+O((\delta^3-{\cal Y}^*)^2)=\exp\{\sigma\}\delta^3+O(\delta^4)>0$$
and further, ${\cal Y}_1=\exp\{\sigma\}\delta^3+O(\delta^4)>0$ by the analysis
in Lemma~\ref{lm-TP}.

Let
$\widehat\gamma^+(t,x_0,y_0):=(\widehat\gamma^+_1(t,x_0,y_0),\widehat\gamma^+_2(t,x_0,y_0))^\top$
denote the orbit of the upper subsystem of \eqref{tpws-thm1-pf} with initial
value $(x_0,y_0)$. Since ${\cal P}^+(x,y)\equiv0$ beside the region
$\Sigma_p$, the orbit $\widehat\gamma^+(t,-3\delta,\delta^3)$ intersects $S(3\delta)$
at the point $(3\delta,{\cal Y}_1)$ for forward direction. Then let
$\Theta(x)$, $\widehat\Theta(x)$ denote the solutions of Cauchy problems
\begin{equation*}
\frac{dy}{dx}=\frac{\phi^+(x,y)x^{3}+\Upsilon^+(x,y)}{f^+(x,y)},~~~~
y\left(0\right)={\cal Y}_1,
\end{equation*}
\begin{equation*}
\frac{dy}{dx}=\frac{\phi^+(x,y)x^{3}+\Upsilon^+(x,y)+{\cal P}^+(x,y)}{f^+(x,y)},~~~~
y\left(0\right)={\cal Y}_1,
\end{equation*}
respectively. Further, over the interval $[-3\delta,3\delta]$, we obtain that
\begin{equation*}
\begin{aligned}
\left|\widehat\Theta(x)-\Theta(x)\right|
& = \left|\int_{0}^{x}\frac{\phi^+(s,\widehat\Theta(s))s^{3}+\Upsilon^+(s,\widehat\Theta(s))+{\cal P}^+(s,\widehat\Theta(s))}{f^+(s,\widehat\Theta(s))}ds\right.\\
& ~~~\left.-\int_{0}^{x}\frac{\phi^+(s,\Theta(s))s^{3}+\Upsilon^+(s,\Theta(s))}{f^+(s,\Theta(s))}ds\right|\\
& \le \left|\int_{0}^{x}\frac{\phi^+(s,\widehat\Theta(s))s^{3}+\Upsilon^+(s,\widehat\Theta(s))+{\cal P}^+(s,\widehat\Theta(s))}{f^+(s,\widehat\Theta(s))}ds\right|\\
& ~~~+\left|\int_{0}^{x}\frac{\phi^+(s,\Theta(s))s^{3}+\Upsilon^+(s,\Theta(s))}{f^+(s,\Theta(s))}ds\right|\\
& = O(\delta^4)
\end{aligned}
\end{equation*}
via the analysis in Lemma~\ref{lm-TP}, i.e., $\widehat\Theta(x)=\exp\{\sigma\}\delta^3+O(\delta^4)$ for $x\in[-3\delta,3\delta]$.
Denote the intersections between $\widehat\gamma^+(t,-3\delta,\delta^3)$
and $S(-2\delta)$ for backward direction (resp. $S(2\delta)$ for forward direction)
as $(-2\delta,\widehat{\cal Y}_1)$ (resp. $(2\delta,\widehat{\cal Y}_2)$),
where $S(c):=\{(c,y):~y\in(0,2\delta^3)\}$.

{\it Step 3. Determine appropriate functions $\psi^+(x,{\boldsymbol l}^+)$
in system~\eqref{pws-thm1-pf}.}
The function $\psi^+(x,{\boldsymbol l}^+)$ is defined by taking $s^+=4$,
\begin{equation*}
\begin{aligned}
&l^+_{1,1}=-2\delta,~~~l^+_{1,2}=-\delta,&&l^+_{2,1}=-\delta,~~~l^+_{2,2}=0,\\
&l^+_{3,1}=0,~~~~~~~l^+_{3,2}=\delta,&&l^+_{4,1}=\delta,~~~~~l^+_{4,2}=2\delta
\end{aligned}
\end{equation*}
and $l^+_{i,3}$ ($i=1,...,4$) are undetermined. Let
$\widetilde\gamma^+(t,x_0,y_0)=(\widetilde\gamma^+_1(t,x_0,y_0),\widetilde\gamma^+_2(t,x_0,y_0))^\top$
denote the orbit of the upper subsystem of \eqref{pws-thm1-pf} with initial
value $(x_0,y_0)$. The definition of $\psi^+(x,{\boldsymbol l}^+)$ implies that
$\widetilde\gamma^+(t,x_0,y_0)\equiv\widehat\gamma^+(t,x_0,y_0)$
beside the region $\{(x,y):~x\in(-2\delta,2\delta),y\in(0,\delta+c)\}$
and thus, the orbit $\widetilde\gamma^+(t,-3\delta,\delta^3)$ intersects
with $S(-2\delta)$ (resp. $S(2\delta)$) at the point $(-2\delta,\widehat{\cal Y}_1)$
(resp. $(2\delta,\widehat{\cal Y}_2)$) for backward (resp. forward) direction.

Then $l^+_{i,3}$ ($i=1,...,4$) are determined as follows.
It can be proved by the analysis in Lemma \ref{lm-TP} that
there exist $l^+_{1,3}$ such that orbit $\widetilde\gamma^+(t,-3\delta,\delta^3)$
passes through visible tangent points $(\lambda^+_1,0)$ for backward direction.
Further, based on this $l^+_{1,3}$ there exist some $l^+_{i,3}$ ($i=2,3$)
such that this orbit passes through the visible tangent point $(\lambda^+_3,0)$.
Finally based on these $l^+_{i,3}$ ($i=1,2,3$), there exist some $l^+_{4,3}$
such that this orbit passes through the point $(2\delta, \widehat{\cal Y}_2)$
for backward direction. This implies that there is a grazing loop connecting
two tangent points of multiplicity $(1,0)$ bifurcating from $L_{I}$
as shown in Figure~\ref{Fig-THM1-1}(a). Similarly, there exist $l^+_{i,3}$ ($i=1,...,4$)
such that there is a grazing loop connecting tangent point $(\lambda^+_1,0)$
(resp. $(\lambda^+_3,0)$) bifurcating from $L_{I}$ as shown in
Figure~\ref{Fig-THM1-1}(b) (resp. (c)).
\begin{figure}[h]
\centering
\subfigure[connecting $(\lambda^+_1,0)$, $(\lambda^+_3,0)$]
 {
  \scalebox{0.42}[0.42]{
   \includegraphics{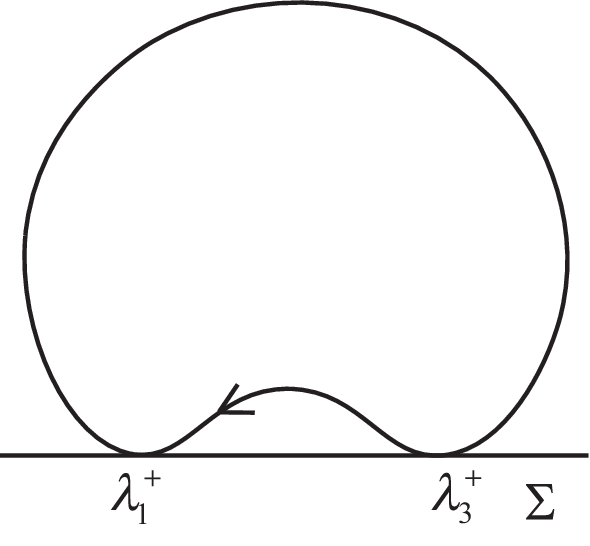}}}~
\subfigure[connecting $(\lambda^+_1,0)$]
 {
  \scalebox{0.42}[0.42]{
   \includegraphics{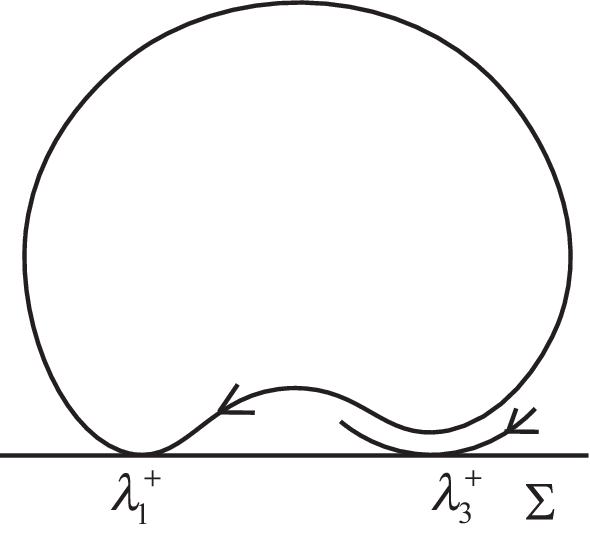}}}~
\subfigure[connecting $(\lambda^+_3,0)$]
 {
  \scalebox{0.42}[0.42]{
   \includegraphics{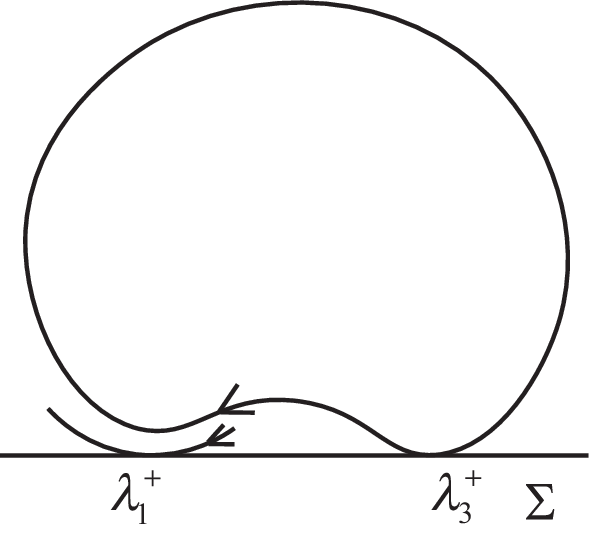}}}
   \caption{The grazing loop of \eqref{pws-thm1-pf} connecting tangent points}
\label{Fig-THM1-1}
\end{figure}

Further, we perturb these grazing loops to obtain the sliding loops
connecting several tangent points. For the cases $\ell=2$, we can perturb
$l^+_{4,3}$ such that the orbits $\widetilde \gamma^+(t,-3\delta,\delta^3)$
intersects $S(2\delta)$ at some point $(2\delta,\widetilde{\cal Y}_2)$ satisfying
$\widetilde {\cal Y}_2>\widehat {\cal Y}_2$ for backward direction.
This implies that the orbit $\widetilde \gamma^+(t,-3\delta,\delta^3)$
intersects the switching manifold $\Sigma$ at some point $(q,0)$
satisfying $q>\lambda^+_3$ for forward direction. It is not hard to check that
$(q,0)\in\Sigma_s$ (the sliding region). As indicated in \cite{Filippov88},
the sliding vector field $X_s(x)$ is defined as
\begin{equation*}
X_s(x):=({\cal X}_s(x),0)^\top=\left(\frac{f^+(x,0)g^-(x,0)-f^-(x,0)g^+(x,0)}{g^-(x,0)-g^+(x,0)},0 \right)^\top
\end{equation*}
and it is not difficult to check that ${\cal X}_{s}(x,0)<0$, i.e.,
the sliding orbit goes to $(\lambda^+_3,0)$. Thus, the orbit
$\widetilde \gamma^+(t,-3\delta,\delta^3)$ goes to the visible tangent point
$(\lambda^+_3,0)$ along a sliding orbit. Further, we continue to perturb
$l^+_{2,3}$ and $l^+_{3,3}$ such that this orbit goes to the visible tangent
point $(\lambda^+_1,0)$ along a sliding orbit by a similar way. Thus, there
is a sliding loop connecting two tangent points as shown in
Figure~\ref{Fig-THM1-2}(a). Note that there is no standard
limit cycle in $\Sigma^+$ because of the stability of $L_{I}$.
The proof of $\ell=2$ is finished.
\begin{figure}[h]
\centering
\subfigure[connecting $(\lambda^+_1,0)$, $(\lambda^+_3,0)$]
 {
  \scalebox{0.42}[0.42]{
   \includegraphics{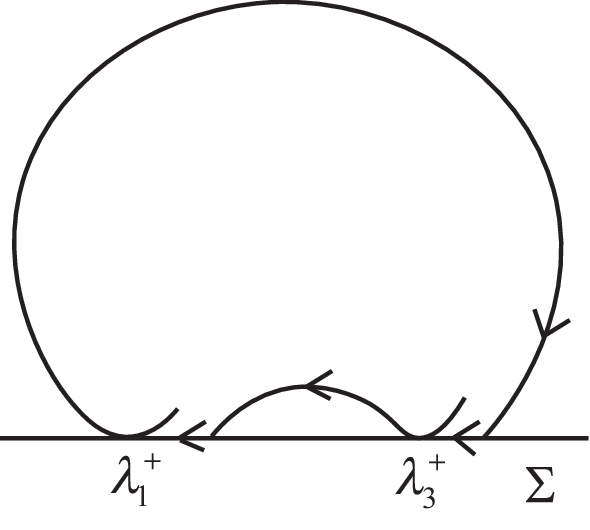}}}~
\subfigure[connecting $(\lambda^+_1,0)$]
 {
\scalebox{0.42}[0.42]{
   \includegraphics{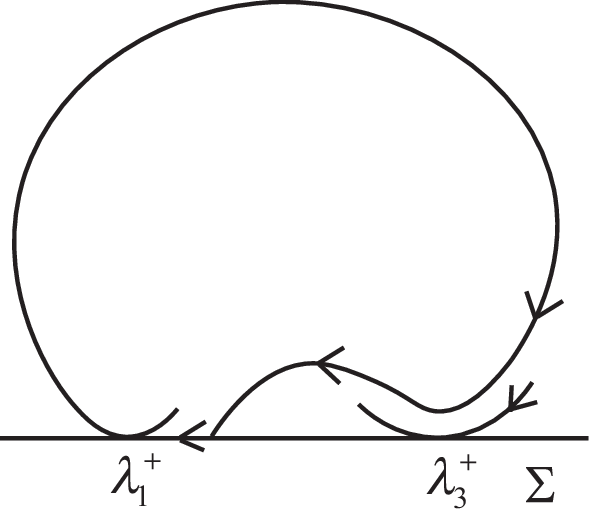}}}~
\subfigure[connecting $(\lambda^+_3,0)$]
 {
\scalebox{0.42}[0.42]{
   \includegraphics{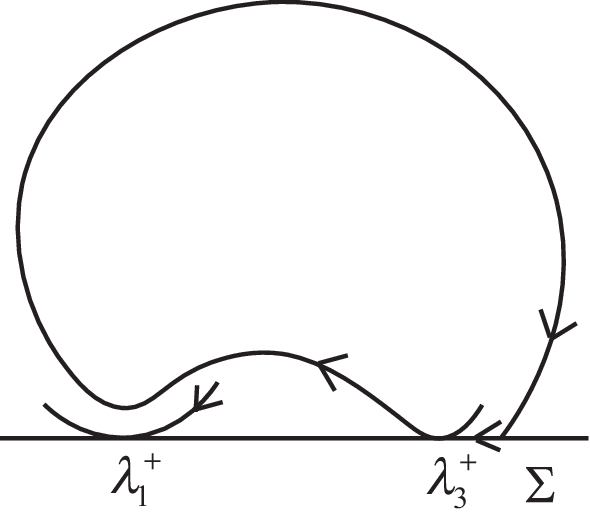}}}
   \caption{The sliding loop of \eqref{pws-thm1-pf} connecting tangent points}
\label{Fig-THM1-2}
\end{figure}

Similarly, for the grazing loop connecting one tangent point $(\lambda^+_1,0)$
(resp. $(\lambda^+_3,0)$), we can perturb some $l^+_{i,3}$ such that
this grazing loop becomes a sliding loop and simultaneously, there is no
standard limit cycle in $\Sigma^+$. Proof of $\ell=1$ is finished and consequently,
we prove the cases that $k=1$. Corresponding examples are shown in Figure~\ref{Fig-THM1-2}(b)
and (c).

Here we claim that for any $\epsilon>0$, there exists $\delta_0>0$ such that
$
\rho({\cal Z},\widetilde {\cal Z})<\epsilon, \forall \delta<\delta_0,
$
i.e., all analysis is under the framework of perturbation,
where ${\cal Z}$ and $\widetilde {\cal Z}$ denote the vector fields
of system~\eqref{pws-thm1} and \eqref{pws-thm1-pf} respectively.
Beside the region $\Sigma_p$, one can check that ${\cal Z}\equiv\widetilde{\cal Z}$
because of the definitions of $\nu(x)$ and $\mu(y)$. In the region
$\{(x,y):~x\in(-2\delta,2\delta),y\in(0,\delta+c)\}$, one can check that
$
\rho({\cal Z},\widehat{\cal Z})
\le \rho({\cal Z},\widehat{\cal Z})+\rho(\widehat{\cal Z},\widetilde{\cal Z})\to 0
$
as $\delta\to0$ via Proposition~\ref{prop-2}. In the region
$\{(x,y):~x\in(-3\delta,-2\delta), y\in(0,\delta+c)\}$, one can compute
that as $\delta\to0$,
\begin{equation*}
\begin{aligned}
|{\cal Z}^+_2-\widetilde{\cal Z}^+_2|
& = \left|\phi^+(x,y)\prod_{i=1}^{3}(x-\lambda^+_i)-\phi^+(x,y)x^{3}\right|\nu(x)\mu(y)\\
& \le |\phi^+(x,y)|\left|\prod_{i=1}^{3}(x-\lambda^+_i)-x^{2k+1}\right|\to 0
\end{aligned}
\end{equation*}
and
\begin{equation*}
\begin{aligned}
|{\cal Z}^+_{2y}-\widetilde{\cal Z}^+_{2y}|
& = \left|\left(\phi^+_y(x,y)\prod_{i=1}^{3}(x-\lambda^+_i)-\phi^+_y(x,y)x^{3}\right)\nu(x)\mu(y)\right.\\
& ~~~+\left.\left(\phi^+(x,y)\prod_{i=1}^{3}(x-\lambda^+_i)-\phi^+(x,y)x^{3}\right)\nu(x)\dot\mu(y)\right|\\
& \le |\phi^+_y(x,y)|\left|\prod_{i=1}^{3}(x-\lambda^+_i)-x^{3}\right|+|\phi^+(x,y)|\left|\prod_{i=1}^{3}(x-\lambda^+_i)-x^{3}\right||\dot\mu(y)|\to0
\end{aligned}
\end{equation*}
because of the boundedness of $\dot\mu(y)$.
Straight computation shows that for $x\in (-3\delta,-2\delta)$
$$
\dot\nu(x)=\left(\frac{1}{(x+3\delta)^2}+\frac{1}{(x+2\delta)^2}\right)
\frac{e^{\eta(x)}}{\left(1+e^{\eta(x)}\right)^2},
$$
where
$
\eta(x):=\left(x+3\delta\right)^{-1}+\left(x+2\delta\right)^{-1}.
$
Thus, we obtain that
\begin{equation*}
\begin{aligned}
|{\cal Z}^+_{2x}-\widetilde{\cal Z}^+_{2x}|
& = \left|\phi^+_x(x,y)\left(\prod_{i=1}^{3}(x-\lambda^+_i)-x^{3}\right)\nu(x)\mu(y)\right.\\
& ~~~+\phi^+(x,y)\left(\sum_{j=1}^{3}\prod_{i=1,i\ne j}^{3}(x-\lambda^+_i)-3x^{2}\right)\nu(x)\mu(y)\\
& ~~~+\left.\phi^+(x,y)\left(\prod_{i=1}^{3}(x-\lambda^+_i)-x^{3}\right)\dot\nu(x)\mu(y)\right|\\
& \le \left|\phi^+_x(x,y)\left(\prod_{i=1}^{3}(x-\lambda^+_i)-x^{3}\right)\right|\\
& ~~~+\left|\phi^+(x,y)\left(\sum_{j=1}^{3}\prod_{i=1,i\ne j}^{3}(x-\lambda^+_i)-3x^{2}\right)\right|\\
& ~~~+\left|\phi^+(x,y)\left(\prod_{i=1}^{3}(x-\lambda^+_i)-x^{3}\right)\dot\nu(x)\right|\\
& = O(\delta^{3})+O(\delta^{2})+O(\delta)\to0
\end{aligned}
\end{equation*}
as $\delta\to0$. This proves the claim.

The cases that $k\ge2$ can be proved by a similar way and we just give
a brief proof. The function $\nu(x)$ is defined by taking
$$k_1=-(k+2)\delta,~~~k_2=-(k+1)\delta,~~~k_3=(k+1)\delta,~~~k_4=(k+2)\delta$$
and the function $\mu(y)$ is defined by the same way as shown in {\it Step 1.}
Then we take
$\lambda^+_{i}=-k\delta+(i-1)\delta$ for all $i=1,...,2k+1$
to desingularize the tangent point $O$ and then for each $\ell\in\{1,...,k+1\}$,
we can define the function $\psi^+(x,{\boldsymbol l}^+)$ such that
there is grazing loop connecting $\ell$ tangent points. Further, we can
continue to perturb some $l^+_{i,3}$ such that the grazing loop becomes
a sliding loop connecting $\ell$ tangent points.
Note that there is no standard limit cycle in $\Sigma^+$ because $\sigma<0$.
Thus, the cases that $\sigma<0$ are proved.

When $\sigma>0$, it can be proved by a similar way that there exist appropriate
${\boldsymbol \lambda}^+$ and functions $\psi^+(x,{\boldsymbol l}^+)$, $\nu(x)$,
$\mu(y)$ such that there is a sliding loop connecting several tangent points
and a standard limit cycle in $\Sigma^+$ because of the unstability of $L_I$.
Thus, the grazing loop $L_{I}$ satisfying $g^-(0,0)>0$ is analyzed for 
all $k\in{\mathbb Z}^+$. For the cases that $g^-(0,0)<0$, analysis is similar
and we omit the statements. 
\end{proof}

For the system~\eqref{pws1} satisfying assumptions {\bf A}$_1$ and {\bf A}$_3$,
it takes the following form
\begin{equation}
\begin{aligned}
            \left(\begin{array}{c}
            \dot{x}\\
            \dot{y}\\
        \end{array}\right)=\left\{
        \begin{aligned}
    &       \left(\begin{array}{c}
                f^+(x,y)\\
                \phi^+(x,y)x^{2k+1}+\Upsilon^+(x,y)
            \end{array}\right)   &&\mathrm{if~}(x,y)\in\Sigma^+,\\
    &       \left(\begin{array}{c}
                f^-(x,y)\\
                \phi^-(x,y)x+\Upsilon^-(x,y)
            \end{array}\right)   &&\mathrm{if~}(x,y)\in\Sigma^-,
        \end{aligned} \right.
    \end{aligned}
\label{pws-thm2}
\end{equation}
where $k\in{\mathbb Z}^+$, $\phi^\pm(0,0)\ne0$ and $\Upsilon^\pm(x,0)\equiv0$.
The proof of Theorem~\ref{thm2} is given by analyzing the following
perturbation system
\begin{equation}
\begin{aligned}
            \left(\begin{array}{c}
            \dot{x}\\
            \dot{y}\\
        \end{array}\right)\!=\!\left\{
        \begin{aligned}
    &       \left(\begin{array}{c}
                f^+(x,y)\\
                \phi^+(x,y)x^{2k+1}+\Upsilon^+(x,y)+{\cal P}^+(x,y)
            \end{array}\right)   &&\mathrm{if}~(x,y)\in\Sigma^+,\\
    &       \left(\begin{array}{c}
                f^-(x,y)\\
                \phi^-(x,y)x+\Upsilon^-(x,y)+{\cal P}^-(x,y)
            \end{array}\right)   &&\mathrm{if}~(x,y)\in\Sigma^-,
        \end{aligned} \right.
    \end{aligned}
\label{pws-thm2-pf}
\end{equation}
where ${\cal P}^+(x,y), \nu(x), \mu(y)$ are defined in \eqref{perturb-fun}, \eqref{df-nu}, \eqref{df-mu} respectively and
${\cal P}^-(x,y):=\phi^-(x,y)(x-\lambda^-_1)-\phi^-(x,y)x+\psi^-(x,{\boldsymbol l}^-)$
for $\lambda^-_1\in{\mathbb R}$, $\psi^\pm(x,{\boldsymbol l}^\pm)$ are
of form \eqref{df-psi}. 

\begin{proof}[Proof of Theorem \ref{thm2}]
For the conclusion (a), we first consider the cases that $\sigma<0$ and 
begin with $k=1$. Now $\ell\in\{1,2\}$ and the proof proceeds the 
following four steps.

{\it Step 1. Determine appropriate functions $\nu(x)$ and $\mu(y)$.}
As indicated in the {\it Step 1.} of the proof of Theorem~\ref{thm1},
the function $\nu(x)$ is defined by taking
$k_1=-3\delta, k_2=-2\delta, k_3=2\delta, k_4=3\delta$
and the function $\mu(y)$ is defined by taking
$r_1=\delta, r_2=\delta+c$
for a sufficiently small $\delta>0$,
where $c$ is a positive constant such that the region
$\{(x,y):~x\in(-3\delta,3\delta),y\in(\delta,\delta+c)\}$ is in the
interior of $L_{II}$. Corresponding properties of system~\eqref{pws-thm2-pf}
are shown in the proof of Theorem~\ref{thm1} and we omit these for simplification.

{\it Step 2. Determine appropriate parameters ${\boldsymbol \lambda}^+$
and $\lambda^-_1$ in system~\eqref{pws-thm2-pf}.} We first take
$\psi^\pm(x,{\boldsymbol l}^\pm)\equiv0$ in system~\eqref{pws-thm2-pf},
i.e., obtain the following transitional system between \eqref{pws-thm2}
and \eqref{pws-thm2-pf} in the region $\{(x,y):~x\in(-2\delta,2\delta),y\in(-\infty,\delta)\}$
\begin{equation}
\begin{aligned}
            \left(\begin{array}{c}
            \dot{x}\\
            \dot{y}\\
        \end{array}\right)=\left\{
        \begin{aligned}
    &       \left(\begin{array}{c}
                f^+(x,y)\\
                \phi^+(x,y)\!\!\prod\limits_{i=1}^{2k+1}\!\!(x-\lambda^+_i)+\Upsilon^+(x,y)
            \end{array}\right)   &&\mathrm{if}~(x,y)\in\Sigma^+,\\
    &       \left(\begin{array}{c}
                f^-(x,y)\\
                \phi^-(x,y)(x-\lambda^-_1)+\Upsilon^-(x,y)
            \end{array}\right)   &&\mathrm{if}~(x,y)\in\Sigma^-.
        \end{aligned} \right.
    \end{aligned}
\label{tpws-thm2-pf}
\end{equation}
Then we take $\lambda^-_1=0$ and consequently, there is an invisible tangent point
$(0,0)$ for the lower subsystem of \eqref{tpws-thm2-pf}. As indicated in
\cite{Filippov88}, for any small $x>0$, the orbit $\widehat\gamma^-(t,x,0)$
intersects $\Sigma$ at some point $(\widehat x,0)$ satisfying
$
\widehat x=-x+O(x^2)
$
for forward direction, where
$$\widehat\gamma^-(t,x_0,y_0):=(\widehat\gamma^-_1(t,x_0,y_0),\widehat\gamma^-_2(t,x_0,y_0))^\top$$
denotes the orbit of the lower subsystem of \eqref{tpws-thm2-pf} with initial
value $(x_0,y_0)$. Further, for a sufficiently small $\delta>0$,
denote the intersection between the orbit $\widehat \gamma^+(t,\delta,0)$
and $\Sigma$ as $(\widehat \delta,0)=(-\delta+O(\delta^2),0)$ for forward direction. Further,
we take
$
\lambda^+_1=\widehat \delta, \lambda^+_2=0, \lambda^+_3=\delta
$
in system~\eqref{tpws-thm2-pf}, which implies that there are two
visible tangent points $(\lambda^+_1,0)$, $(\lambda^+_3,0)$ and
an invisible tangent point $(\lambda^+_2,0)$. Let
$\widehat\gamma^+(t,x_0,y_0)$ denote the orbit of upper subsystem
of transitional system~\eqref{tpws-thm2-pf} with initial value $(x_0,y_0)$.
Then we take three small vertical line segments
$S(c):=\{(c,y):~y\in(0,2\delta^3)\}$, where $c=-3\delta,0,3\delta$,
and check that the orbit $\widehat\gamma^+(t,-3\delta,\delta^3)$ intersects
$S(3\delta)$ and $S(0)$ at some point $(3\delta,\widehat{\cal Y}_1)$
and $(0,\widehat{\cal Y}_2)$ for forward direction respectively. Denote the
intersection between the orbit $\widehat\gamma^+(t,-3\delta,\delta^3)$ and
$S(0)$ as $(0,\widehat{\cal Y}_3)$ for backward direction. It can be proved
by a similar way in Theorem~\ref{thm1} that $
\widehat{\cal Y}_i=\exp\{\sigma\}\delta^3+O(\delta^4)>0
$ for $i=1,2,3$.

{\it Step 3. Determine functions $\psi^\pm(x,{\boldsymbol l}^\pm)$
in system~\eqref{pws-thm2-pf}.} The function $\psi^+(x,{\boldsymbol l}^+)$ is defined
by taking $s^+=4$,
\begin{equation*}
\begin{aligned}
&l^+_{1,1}=-2\delta,~~~l^+_{1,2}=\widehat\delta,&&l^+_{2,1}=\widehat\delta,~~~l^+_{2,2}=0,\\
&l^+_{3,1}=0,~~~~~~~l^+_{3,2}=\delta,&&l^+_{4,1}=\delta,~~~l^+_{4,2}=2\delta
\end{aligned}
\end{equation*}
and $l^+_{i,3}$ $(i=1,...,4)$ are undetermined. As indicated in the proof of
Theorem~\ref{thm1},
$\widetilde\gamma^+(t,-3\delta,\delta^3)\equiv\widehat\gamma^+(t,-3\delta,\delta^3)$
outside the region $\{(x,y):~x\in(-3\delta,3\delta),y\in(0,\delta+c)\}$,
where $\widetilde\gamma^+(t,x_0,y_0)$ denotes the orbit of the upper
subsystem of \eqref{pws-thm2-pf} with initial value $(x_0,y_0)$.
Further, it can be proved by lemma \ref{lm-TP} that there exist some
$l^+_{i,3}$ ($i=1,...,4$) such that the orbit $\widetilde\gamma^+(t,-3\delta,\delta^3)$
passes through the visible tangent point $(\lambda^+_1,0)$ and intersects
$S(0)$ at some point $(0,\widetilde{\cal Y}_2)$ satisfying
$\widetilde{\cal Y}_2<\min\{\widehat{\cal Y}_2,\widehat{\cal Y}_3\}$
for backward direction. And simultaneously, the orbit $\widetilde\gamma^+(t,-3\delta,\delta^3)$
passes through the visible tangent point $(\lambda^+_3, 0)$ and intersects
$S(0)$ at some point $(0,\widetilde{\cal Y}_1)$ satisfying
$\widetilde{\cal Y}_1>\max\{\widehat{\cal Y}_2,\widehat{\cal Y}_3\}$
for forward direction. Thus, we obtain 
\begin{equation}
\widetilde {\cal Y}_2<\widetilde {\cal Y}_1
\label{L-outter}
\end{equation}
and there is a critical loop connecting two tangent point of multiplicity $(1,0)$
bifurcating from the grazing loop $L_{II}$ by taking
$\psi^-(x,{\boldsymbol l}^-)\equiv0$ as shown in Figure~\ref{Fig-THM2-1}
(denoted as $L^{cri}_1$). In fact, there is a standard limit cycle
because of the stability of $L_{II}$ and \eqref{L-outter}.
\begin{figure}[htp]
\centering
\includegraphics[scale=0.45]{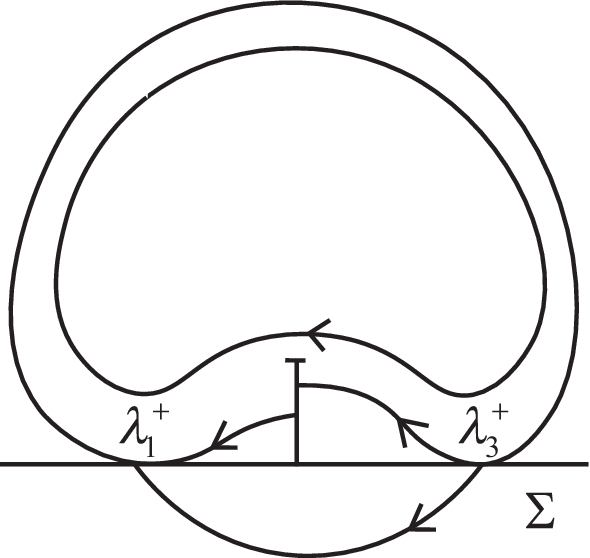}
\caption{The grazing loop $L^{cri}_1$ and the standard limit cycle}
\label{Fig-THM2-1}
\end{figure}

{\it Step 4. Perturb $L^{cri}_1$ to obtain crossing limit cycles and
sliding loops.} In order to characterize $L^{cri}_1$, we take a small vertical line segment
$S(a):=\{(a,y):~y\in(b-\epsilon,b+\epsilon)\}$
at some point $(a,b)$ on $L^{cri}_1$ satisfying $f^+(a,b)>0$.
Further for $(x,0)\in S_1:=\{(x,0):~x\in(\lambda^+_1-\epsilon,\lambda^+_1)\}$,
the orbit $\widetilde\gamma^+(t,x,0)$ intersects $S(a)$ and
$S_2:=\{(x,0):~x\in(\lambda^+_3,\lambda^+_3+\epsilon)\}$ for forward direction.
This implies that we can define two half return maps
\begin{equation*}
\begin{aligned}
P^+_1(x):~~&S_1&&\to ~~~S(a)\\
           &x&&\mapsto ~~~b+V^+_1(x)
\end{aligned}
\end{equation*}
and
\begin{equation*}
\begin{aligned}
P^+_2(x):~~&S_2&&\to ~~~S(a)\\
           &x&&\mapsto ~~~b+V^+_2(x)
\end{aligned}
\end{equation*}
where $V^+_1(x)$ (resp. $V^+_2(x)$) is the transition map from $S_1$ (resp. $S_2$)
to $S(a)$. Let $\widetilde{\cal Z}^\pm(x,y):=(\widetilde{\cal Z}^\pm_1(x,y),\widetilde{\cal Z}^\pm_2(x,y))^\top$
denote the vector fields of the upper and lower subsystem of \eqref{pws-thm2-pf}. Further,
it can be proved by lemma~\ref{lm-TM} that
\begin{equation*}
\begin{aligned}
&P^+_1(x)=b+V^+_{1,2}(x-\lambda^+_1)^2+O((x-\lambda^+_1)^3),\\
&P^+_2(x)=b+V^+_{2,2}(x-\lambda^+_3)^2+O((x-\lambda^+_3)^3),
\end{aligned}
\end{equation*}
where
\begin{equation*}
\begin{aligned}
&V^+_{1,2}:=\frac{\widetilde{\cal Z}^+_{2x}(\lambda^+_1,0)}{2\widetilde{\cal Z}^+_1(a,b)}\exp\left\{\int_0^{{\cal T}_a}\frac{\partial \widetilde {\cal Z}^+_1(\widetilde\gamma^+(s,\lambda^+_1,0))}{\partial x}+\frac{\partial \widetilde {\cal Z}^+_2(\widetilde\gamma^+(s,\lambda^+_1,0))}{\partial y}ds\right\},\\
&V^+_{2,2}:=\frac{\widetilde{\cal Z}^+_{2x}(\lambda^+_3,0)}{2\widetilde{\cal Z}^+_1(a,b)}\exp\left\{\int_{{\cal T}}^{{\cal T}_a}\frac{\partial \widetilde {\cal Z}^+_1(\widetilde\gamma^+(s,\lambda^+_1,0))}{\partial x}+\frac{\partial \widetilde {\cal Z}^+_2(\widetilde\gamma^+(s,\lambda^+_1,0))}{\partial y}ds\right\}
\end{aligned}
\end{equation*}
and ${\cal T}_a$ (resp. ${\cal T}$) is the time when the orbit
$\widetilde\gamma^+(t,\lambda^+_1,0)$ intersects $S(a)$ (resp. $S_2$).

For the lower subsystem of \eqref{pws-thm2-pf}, Since there is an invisible tangent
point $(0,0)$ and $S_i$ ($i=1,2$) are near it, we define the half return map
\begin{equation*}
\begin{aligned}
P^-(x):~~&S_1&&\to ~~~S_2\\
           &x&&\mapsto ~~~-x+O(x^2)
\end{aligned}
\end{equation*}
as indicated in \cite{Filippov88}. Further, the displacement function is defined
as follows
\begin{equation*}
\begin{aligned}
D(x)
& := P^+_1(x)-P^+_2\circ P^-(x) \\
& = (V^+_{1,2}-V^+_{2,2})(x-\lambda^+_1)^2+O((x-\lambda^+_1)^3)\\
& = \frac{1}{2\widetilde{\cal Z}^+_1(a,b)}\frac{\widetilde{\cal Z}^+_{2x}(\lambda^+_1,0)}{E({\cal T},{\cal T}_a))}\left(\frac{E(0,{\cal T})}{E({\cal T},{\cal T}_a)}-\frac{\widetilde{\cal Z}^+_{2x}(\lambda^+_3,0)}{\widetilde{\cal Z}^+_{2x}(\lambda^+_1,0)}\right)(x-\lambda^+_1)^2+O((x-\lambda^+_1)^3),
\end{aligned}
\end{equation*}
where
$$E(x_0,x_1):=\exp\left\{\int_{x_0}^{x_1}\frac{\partial \widetilde {\cal Z}^+_1(\widetilde\gamma^+(s,\lambda^+_1,0))}{\partial x}+\frac{\partial \widetilde {\cal Z}^+_2(\widetilde\gamma^+(s,\lambda^+_1,0))}{\partial y}ds\right\}.$$
It is not difficult to check that
\begin{equation*}
\frac{E(0,{\cal T})}{E^({\cal T},{\cal T}_a)} = \exp\left\{\int_{0}^{{\cal T}}\frac{\partial \widetilde {\cal Z}^+_1(\widetilde\gamma^+(s,\lambda^+_1,0))}{\partial x}+\frac{\partial \widetilde {\cal Z}^+_2(\widetilde\gamma^+(s,\lambda^+_1,0))}{\partial y}ds\right\}
\end{equation*}
and it is near the constant $\exp\{\sigma\}$ for sufficiently small $\delta$,
where $\sigma<0$, i.e., $\exp\{\sigma\}<1$.
On the other hand, it can be obtained by a straight computation that
\begin{equation*}
\begin{aligned}
\frac{\widetilde{\cal Z}^+_{2x}(\lambda^+_3,0)}{\widetilde{\cal Z}^+_{2x}(\lambda^+_1,0)}
& = -\frac{\phi^+(\lambda^+_3,0)(\lambda^+_3-\lambda^+_2)}{\phi^+(\lambda^+_1,0)(\lambda^+_1-\lambda^+_2)}\\
& = \frac{\phi^+(\delta,0)}{\phi^+(-\delta+O(\delta^2),0)(1+O(\delta^2))}\to 1
\end{aligned}
\end{equation*}
as $\delta\to0$. Thus for sufficiently small $\delta>0$,
the displacement function $D(x)$ starts at degree $2$ because
$V^+_{1,2}-V^+_{2,2}>0.$
This implies that $L^{cri}_1$ is unstable, i.e., there is a point
$(Q_1,0)\in S_2$ such that the orbit starting from it return to
$S_2$ at some point $(\widetilde Q_1,0)$ satisfying
\begin{equation}
Q_1<\widetilde Q_1
\label{outter}
\end{equation}
for forward direction. And simultaneously, the sum of the numbers
of crossing limit cycles and sliding loops is at most two under perturbations.

Then we prove the reachability by perturbing some $l^+_{i,3}$ and defining
appropriate nonzero function $\psi^-(x,{\boldsymbol l}^-)$.
We define $\psi^-(x,{\boldsymbol l}^-)$ by taking
$s^-=1$ and
$
l^-_{1,1}=0, l^-_{1,2}=\delta, l^-_{1,3}=\epsilon_1,
$
where $\epsilon_1>0$ and is sufficiently small.
It can be proved by the analysis in Lemma \ref{lm-TP} that
the orbit $\widetilde\gamma^-(t,\widehat\delta,0)$ intersects $\Sigma$
at some point $(\widetilde\delta,0)$ satisfying $0<\widetilde\delta-\lambda^+_3\ll1$
for backward direction, where $\widetilde\gamma^-(t,x_0,y_0)$
denotes the orbit of the lower subsystem of \eqref{pws-thm2-pf} with initial value
$(x_0,y_0)$. Further, we can perturb $l^+_{4,3}$ and keep $l^+_{i,3}$ ($i=1,2,3$)
such that the orbit $\widetilde\gamma^+(t,-3\delta,\delta^3)$ passes through
the tangent point $(\lambda^+_1,0)$ for backward direction and exactly intersects
$\Sigma$ at $(\widetilde\delta,0)$ for forward direction, i.e.,
there is a critical loop connecting the tangent point $(\lambda^+_1,0)$
(denoted as $L^{cri}(\lambda^+_1)$). As indicated in \cite{Ponce15,Han13},
the loop $L^{cri}(\lambda^+_1)$ is stable, i.e. there is a point $(P_1,0)\in S_2$
such that the orbit starting from it returns to $S_2$ at some point $(\widetilde P_1,0)$
satisfying
\begin{equation}
P_1>\widetilde P_1
\label{inner}
\end{equation}
for forward direction. Further, we continue to perturb the loop $L^{cri}(\lambda^+_1)$
by perturbing $l^+_{4,3}$. For the case that $\ell=1$, we can perturb
$l^+_{4,3}$ such that the orbit $\widetilde\gamma^+(t,-3\delta,\delta^3)$
intersects $\Sigma$ at some point $(\widetilde\delta^-,0)$ satisfying
$
\lambda^+_3<\widetilde\delta^-<\widetilde\delta
$
for forward direction. Further, this orbit enters the region $\Sigma^-$ and then
intersects $\Sigma$ at some point $(\widetilde\lambda^+_1,0)$ satisfying
$0<\widetilde\lambda^+_1-\lambda^+_1\ll1$. It is not hard to check that
$(\widetilde\lambda^+_1,0)\in\Sigma_s$ and the first component of the sliding
vector field ${\cal X}_s(\widetilde\lambda^+_1)<0$. Thus, this orbit returns to
the tangent point $(\lambda^+_1,0)$, i.e., there is a sliding loop. Meanwhile,
since perturbations of $l^+_{4,3}$ and $l^-_{1,3}$ are sufficiently small,
the inequalities \eqref{outter} and \eqref{inner} still hold and consequently,
there is a crossing limit cycle which intersects $\Sigma$ at some point $({\cal P},0)$
satisfying $P_1<{\cal P}<Q_1$. There is a standard limit cycle in $\Sigma^+$
because of the hyperbolicity of $L_{II}$. The case that $\ell=1$ is proved and
an example is given in Figure~\ref{Fig-THM2-2}(a). For the case that $\ell=2$,
we can perturb $l^+_{4,3}$ such that the orbit $\widetilde\gamma^+(t,-3\delta,\delta^3)$
intersects $\Sigma$ at some point $(\widetilde\delta^+,0)$ satisfying
$0<\widetilde\delta^+-\widetilde\delta\ll1,$
i.e., there is a crossing limit cycle bifurcating from $L^{cri}(\lambda^+_1)$.
And simultaneously, inequalities \eqref{L-outter}, \eqref{outter} and \eqref{inner}
hold because of the small perturbations, and further, there are two crossing limit
cycles and a standard limit cycle. The case that $\ell=2$ is proved and an example
is given in Figure~\ref{Fig-THM2-2}(b).
\begin{figure}[h]
\centering
\subfigure[$\beta_c\ge1$ and $\beta_s=1$]
 {
  \scalebox{0.44}[0.44]{
   \includegraphics{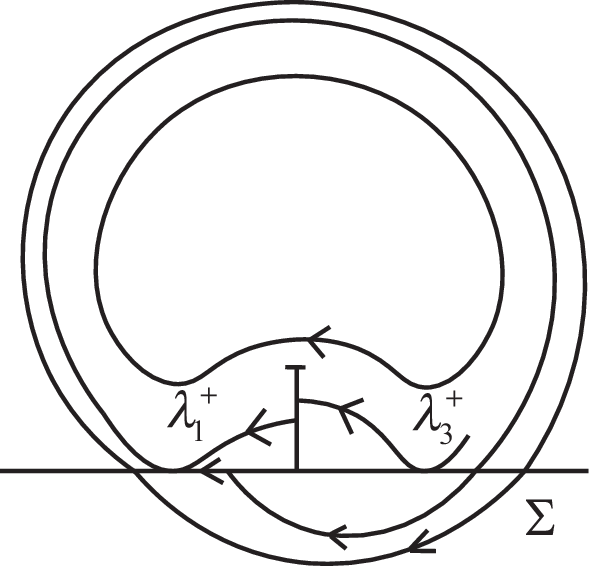}}}~~~~~~~~~
\subfigure[$\beta_c\ge2$ and $\beta_s=0$]
 {
  \scalebox{0.44}[0.44]{
   \includegraphics{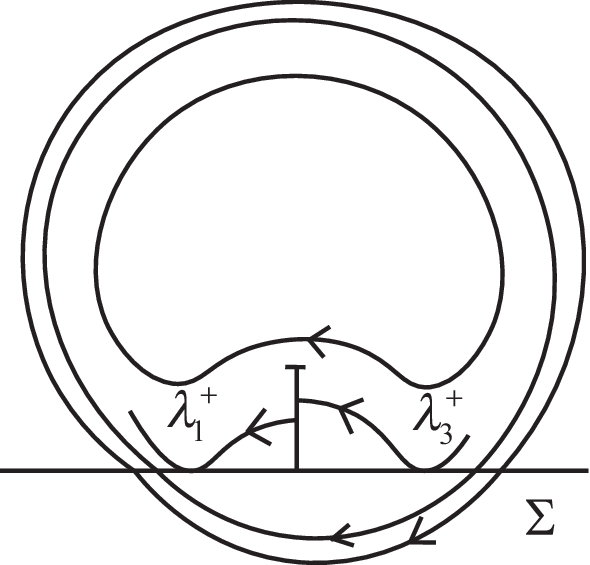}}}
   \caption{The standard limit cycle, crossing limit cycles and sliding loops of \eqref{pws-thm2-pf}}
\label{Fig-THM2-2}
\end{figure}

Then we consider the case that $k=2$ and in this case, $\ell\in\{2,3,4\}$.
The proof is similar with ones of case that $k=1$ and also proceeds four steps.

{\it Step 1. Determine appropriate functions $\nu(x)$ and $\mu(y)$.}
The function $\nu(x)$ is defined by taking
$k_1=-4\delta, k_2=-3\delta, k_3=3\delta, k_4=4\delta$
and the function $\mu(y)$ is defined by taking
$r_1=\delta, r_2=\delta+c$
for a sufficiently small $\delta>0$,
where $c$ is a positive constant such that the region
$\{(x,y):~x\in(-3\delta,3\delta),y\in(\delta,\delta+c)\}$ is in the
interior of $L_{II}$.

{\it Step 2. Determine appropriate parameter ${\boldsymbol \lambda}^+$ and
$\lambda^-_1$ in system \eqref{pws-thm2-pf}.} Similarly, we consider the
transitional system~\eqref{tpws-thm2-pf} and take $\lambda^-=0$. Further,
for sufficiently small $\delta>0$, denote the intersection between
the orbit $\widehat\gamma^-(t,2\delta,0)$ and $\Sigma$ as $(\widehat\delta,0)$.
Further, we take
$
\lambda^+_1=\widehat\delta, \lambda^+_2=-\delta, \lambda^+_3=0, \lambda^+_4=\delta, \lambda^+_5=2\delta
$
in system~\eqref{tpws-thm2-pf}. Thus, there are three visible tangent points
$(\lambda^+_1,0)$, $(\lambda^+_3,0)$ and $(\lambda^+_5,0)$ and two invisible
tangent points $(\lambda^+_2,0)$ and $(\lambda^+_4,0)$. Further, we take
four small vertical line segments
$
S(c):=\{(c,y):~y\in(0,3\delta^5)\}$, where $c=-3\delta, -\delta, \delta, 3\delta$.

{\it Step 3. Determine appropriate functions $\psi^\pm(x,{\boldsymbol l}^\pm)$
in system~\eqref{pws-thm2-pf}.} The function $\psi^+(x,{\boldsymbol l}^+)$ by taking $s^+=6$ and
\begin{equation*}
\begin{aligned}
&l^+_{1,1}=-3\delta,~~~&&l^+_{1,2}=\widehat\delta,~~~
&&l^+_{2,1}=\widehat\delta,~~~&&l^+_{2,2}=-\delta,~~~
&&l^+_{3,1}=-\delta,~~~~&&l^+_{3,2}=0,\\
&l^+_{4,1}=0,~~~&&l^+_{4,2}=\delta,~~~
&&l^+_{5,1}=\delta,~~~&&l^+_{5,2}=2\delta,~~~
&&l^+_{6,1}=2\delta,~~~&&l^+_{6,2}=3\delta
\end{aligned}
\end{equation*}
and $l^+_{i,3}$ ($i=1,...,6$) are determined by the following analysis.
It can be proved by Lemma \ref{lm-TP} that there exist appropriate
$l^+_{1,3}$, $l^+_{2,3}$, $l^+_{5,3}$ and $l^+_{6,3}$ such that the orbit
$\widetilde\gamma^+(t,-4\delta,\delta^5)$ passes through the tangent point
$(\lambda^+_5,0)$ and intersects $S(\delta)$ at some point
$(\delta,\widetilde{\cal Y}_{1,1})$ satisfying $\widetilde{\cal Y}_{1,1}>0$
for forward direction. And simultaneously, this orbit passes through the
tangent point $(\lambda^+_1,0)$ and intersects $S(-\delta)$ at some point
$(-\delta,\widetilde{\cal Y}_{1,2})$ satisfying $\widetilde{\cal Y}_{1,2}>0$
for backward direction. Further, there is a critical loop connecting
two tangent points of multiplicity $(1,0)$ (denoted as $L^{cri}_1$)
as shown in the Figure~\ref{Fig-THM2-3}(a).
\begin{figure}[h]
\centering
\subfigure[$L^{cri}_1$]
 {
  \scalebox{0.44}[0.44]{
   \includegraphics{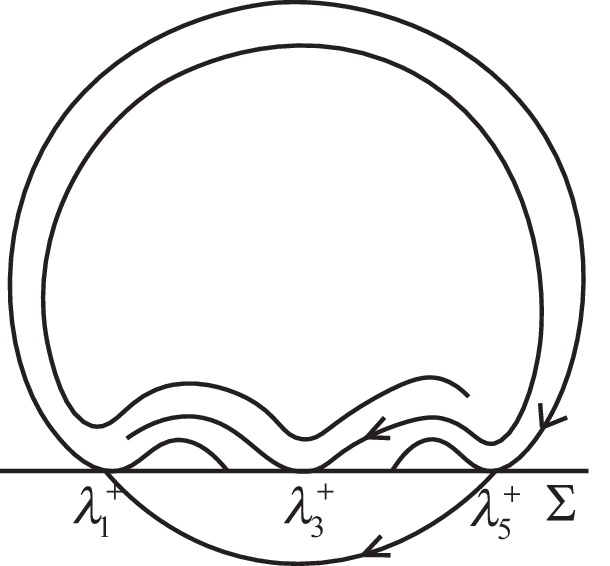}}}~~~~~~~~~
\subfigure[$L^{cri}_1$ and $L^{gra}_*$]
 {
  \scalebox{0.44}[0.44]{
   \includegraphics{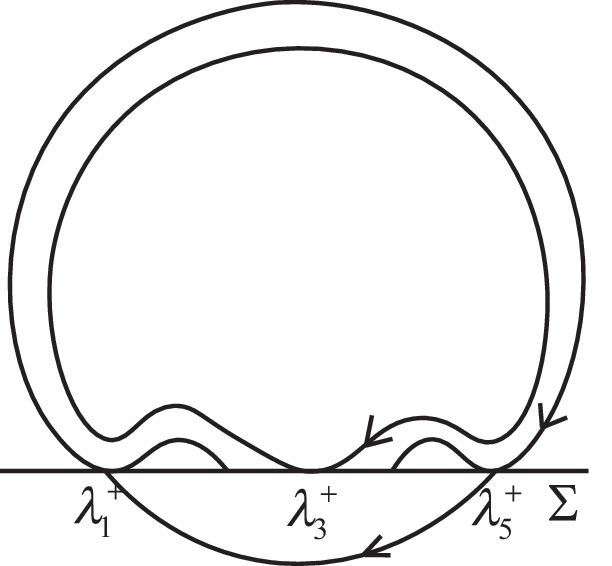}}}
   \caption{The critical loops and grazing loops of \eqref{pws-thm2-pf}}
\label{Fig-THM2-3}
\end{figure}

Then for the orbit $\widetilde\gamma^+(t,-4\delta,2\delta^5)$ and the
parameters $l^+_{i,4}$ ($i=1,2,5,6$), it is not hard to check that
this orbit intersects $S(\delta)$ at some point
$(\delta,\widetilde{\cal Y}_{2,1})$ satisfying
$0<\widetilde{\cal Y}_{1,1}<\widetilde{\cal Y}_{2,1}$ for forward
direction and simultaneously, intersects $S(-\delta)$ at some point
$(-\delta,\widetilde{\cal Y}_{2,2})$ satisfying
$0<\widetilde{\cal Y}_{1,2}<\widetilde{\cal Y}_{2,2}$ for backward direction.
Similarly, it can be proved by lemma~\ref{lm-TP} that there exist $l^+_{3,3}$
and $l^+_{4,3}$ such that the orbit $\widetilde\gamma^+(t,-4\delta,2\delta^5)$
passes through the tangent point $(\lambda^+_3,0)=(0,0)$ for both directions,
i.e., there is a grazing loop connecting a tangent point of multiplicity
$(1,1)$ (denoted as $L^{gra}_*$) as shown in the Figure~\ref{Fig-THM2-3}(b).

{\it Step 4. Perturb $L^{cri}_1$ and $L^{gra}_*$ to obtain crossing limit cycles
and sliding loops.} As indicated in the proof above, we can define the
displacement function with respect to the loop $L^{cri}_1$ and find that
the Taylor expand starts degree $2$ because $\sigma<0$. Meanwhile,
as indicated in \cite{Li20}, we also can define the displacement function
with respect to $L^{gra}_*$ and find that the Taylor expand starts degree $2$
because $\sigma<0$.

Further, we define the nonzero function $\psi^-(x,{\boldsymbol l}^-)$ and
perturb $l^+_{6,3}$ to perturb $L^{cri}_1$. Precisely, the function
$\psi^-(x,{\boldsymbol l}^-)$ is defined by taking $s^-=1$,
$
l^-_{1,1}=\delta, l^-_{1,2}=2\delta
$
and sufficiently small $l^-_{1,3}\ne0$. Then it can be proved by the
analysis in {\it Step 3} of case $k=1$ that for each $\ell\in\{1,2\}$,
there exist some $l^+_{6,3}$ and $l^-_{1,3}$ such that there are
$\ell$ crossing limit cycles and $2-\ell$ sliding loops bifurcating from
$L^{cri}_1$. Note that after perturbing $l^+_{6,3}$,
the loop $L^{gra}_*$ may break. But for the perturbed $l^+_{6,3}$,
we can take a new $l^+_{4,3}$ to keep $L^{gra}_*$ by the analysis in
Lemma \ref{lm-TP}. Further, as indicated in \cite{Li20},
for each $\ell\in\{1,2\}$ there exists some $l^+_{4,4}$ and
$\lambda^-_1\ne0$ such that there are $\ell$ crossing limit cycles
and $2-\ell$ sliding loops bifurcating from $L^{gra}_*$.
Since the perturbations of $l^+_{4,4}$ and $\lambda^-_1$ are sufficiently
small, the crossing limit cycles and sliding loops bifurcating from
$L^{cri}_1$ are preserved. Thus, the cases that $\ell=2,3,4$ are proved because
the bifurcations of $L^{cri}_1$ and $L^{gra}_*$ are independent of each other.
The examples are shown in Figure~\ref{Fig-THM2-4}.
\begin{figure}[h]
\centering
\subfigure[$\beta_c\ge2$ and $\beta_s=2$]
 {
  \scalebox{0.45}[0.45]{
   \includegraphics{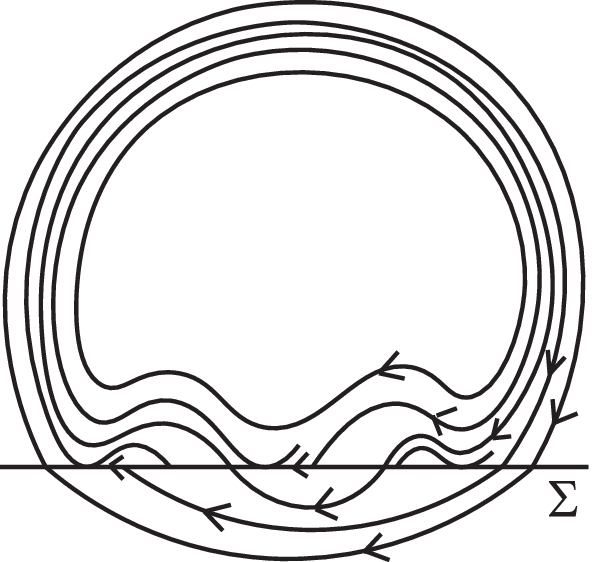}}}
\subfigure[$\beta_c\ge3$ and $\beta_s=1$]
 {
  \scalebox{0.45}[0.45]{
   \includegraphics{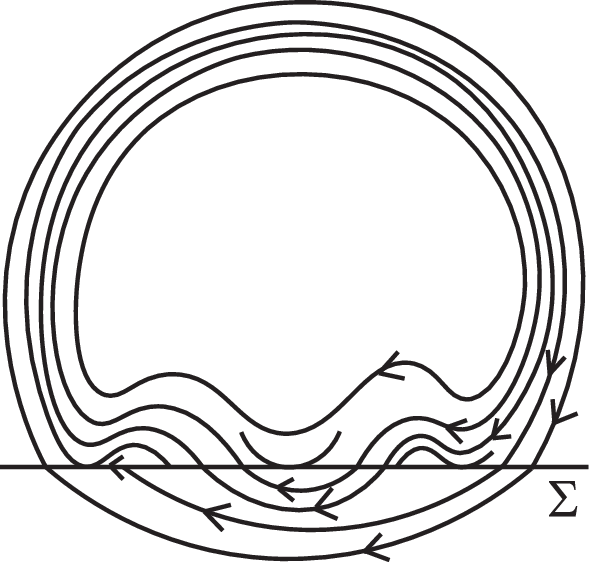}}}
\subfigure[$\beta_c\ge4$ and $\beta_s=0$]
{
  \scalebox{0.45}[0.45]{
   \includegraphics{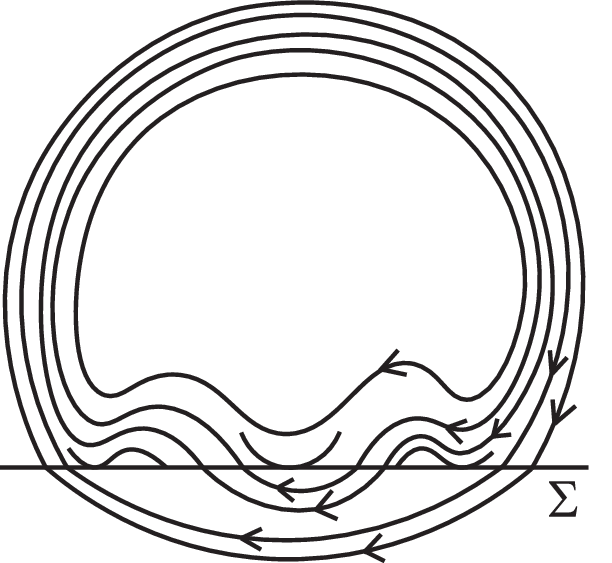}}}
   \caption{The standard limit cycle, crossing limit cycles and sliding loops of \eqref{pws-thm2-pf}}
\label{Fig-THM2-4}
\end{figure}

For the cases that $k$ is odd, it can be proved by the analysis above
that there are exactly $1+\lfloor k/2\rfloor$ critical loops connecting
two tangent points of multiplicity $(1,0)$ (denoted as $L^{cri}_i$
($i=1,...,1+\lfloor k/2\rfloor$)). Meanwhile, locations of them can be
written as
\begin{equation*}
L^{cri}_{1+\lfloor k/2\rfloor}~\hookrightarrow~L^{cri}_{\lfloor k/2\rfloor}~
\hookrightarrow~...~\hookrightarrow~L^{cri}_{2}~
\hookrightarrow~L^{cri}_{1},
\end{equation*}
where $L^{cri}_{i+1}~\hookrightarrow~L^{cri}_{i}$ denotes
the loop $L^{cri}_{i+1}$ is in the region surrounded by $L^{cri}_{i}$.
Thus, the result \eqref{Res} can be obtained by perturbing them sequentially
from $L^{cri}_1$ to $L^{cri}_{1+\lfloor k/2\rfloor}$.

For the case that $k$ is even, it can be proved by the analysis above
that there are exactly $k/2$ critical loops connecting two tangent points
of multiplicity $(1,0)$ (denoted as $L^{cri}_i$ ($i=1,...,k/2$)) and
a grazing loop connecting a tangent point of multiplicity $(1,1)$
(denoted as $L^{gra}_*$). Meanwhile, locations of them can be written as
\begin{equation*}
L^{gra}_*~\hookrightarrow~L^{cri}_{k/2}~\hookrightarrow~...
~\hookrightarrow~L^{cri}_{2}~\hookrightarrow~L^{cri}_{1}.
\end{equation*}
Thus, the result \eqref{Res} can be obtained by perturbing them
sequentially from $L^{cri}_1$ to $L^{cri}_{k/2}$ and to $L^{gra}_*$.
The cases that $\sigma>0$ can be proved by a similar way and we omit the
statements. Thus conclusion (a) is proved.

For the conclusion (b), if $\sigma<0$ (resp. $\sigma>0$), we can first 
take $\psi^-(x,{\boldsymbol l}^-)\equiv0$ and $\lambda^-_1<0$ (resp. $>0$) 
in the lower subsystem of \eqref{pws-thm2-pf} such that the 
additional condition in Theorem~\ref{thm1} holds, i.e., 
$-\phi^-(0,0)\lambda^-_1$ and $\sigma$ have the same sign. 
Thus, there is a grazing loop connecting a tangent point of multiplicity 
$(2k+1,0)$ in system~\eqref{pws-thm2-pf} with ${\cal P}^+(x,y)\equiv0$.
Then the conclusion (b) can be proved by a similar way in
the proof of Theorem~\ref{thm1}.
\end{proof}

\section{Conclusions and remarks}
\setcounter{equation}{0}
\setcounter{lm}{0}
\setcounter{thm}{0}
\setcounter{rmk}{0}
\setcounter{df}{0}
\setcounter{cor}{0}

In this paper, we focus on investigating question (Q)
and analyze bifurcations of grazing loops connecting a tangent point
$O:(0,0)$ of multiplicity $(2k+1,0)$ and $(2k+1,1)$ for general
$k\in{\mathbb Z}^+$. For the cases $(2k+1,0)$, the existence of the
sliding loop connecting several tangent points bifurcating from the grazing
loop $L_{I}$ is given in Theorem~\ref{thm1}, which generalizes the result
in the previous publications (e.g., \cite{Kuznetsov03,Teixeira11})
from $k=0$ to general $k\in{\mathbb Z}^+$. Meanwhile, the relationship
between the number of tangent points on the sliding loop and the
multiplicity $2k+1$ is also given. For the cases $(2k+1,1)$, we mainly
focus on the grazing loop $L_{II}$ in Figure~\ref{Fig-Gra}(c), i.e.,
the tangent point is invisible for the lower subsystem
and $f^-(0,0)<0$. Further, it is shown in Theorem~\ref{thm2} that
there are several crossing limit cycles, sliding loops and
one standard limit cycle bifurcating from the grazing loop $L_{II}$.
Meanwhile, the numbers of crossing limit cycles and sliding loops depend
on the multiplicity $2k+1$ as shown in \eqref{Res}. This generalizes the
result given in the previous publications (e.g., \cite{Han13,Li20}) from $k=0$ to
general $k\in{\mathbb Z}^+$. For the grazing loop $L_{II}$ in
Figure~\ref{Fig-Gra}(d), it is shown in Theorem~\ref{thm2} that there is one
standard limit cycle and a sliding loop connecting several tangent points
under some perturbation. This generalizes the result given in \cite{Li20} from
$k=0$ to general $k$.

As indicated in section~2, there are two main difficulties for question (Q)
arising from the general multiplicity $2k+1$ of the tangent point $O$,
i.e., the local and global recurrences are very complicated under perturbations.
The local recurrences are investigated in Lemma~\ref{lm-TP} by considering
a functional perturbation with functions, i.e., perturbation
system~\eqref{pws2-up-exunfold}. Further, it is shown in the proof of
Theorem~\ref{thm1} that the local recurrences is equivalent to the global
recurrences by introducing some localization functions, i.e., $\nu(x)$ and $\mu(y)$.
Precisely, we consider the perturbation systems~\eqref{pws-thm1-pf} with
\begin{equation*}
{\cal P}^+(x,y)=\left(\psi^+(x,{\boldsymbol l}^+)+\phi^+(x,y)\!\!\prod\limits_{i=1}^{2k+1}\!\!(x-\lambda^+_i)-\phi^+(x,y)x^{2k+1}\right)\nu(x)\mu(y),
\end{equation*}
where $\nu(x)$ and $\mu(y)$ are defined in \eqref{df-nu}
and \eqref{df-mu} respectively. Since the definitions of $\nu(x)$ and $\mu(y)$
lead to that $
{\cal P}^+(x,y)\equiv0
$ beside the region
$\{(x,y):~x\in(k_1,k_4),y\in(0,r_2)\}$
and $
{\cal P}^+(x,y)={\cal Q}^+(x,y)$ (see \eqref{fun-Q}) in the region $\{(x,y):~x\in(k_2,k_3),y\in(0,r_1)\}$,
we take some appropriate $k_i$ ($i=1,...,4$) in $\nu(x)$ and $r_1$, $r_2$
in $\mu(y)$ such that the perturbation function ${\cal P}^+(x,y)$ is
only used to control bifurcations of the tangent point $O$,
i.e., the functional perturbation is localized, as shown in the proof of
Theorem~\ref{thm1}. This ensures that the intersections
between the limit cycle $L_{I}$ and $\Sigma$ directly correspond to
interconnections between tangent orbits, i.e., an equivalent relationship is
established. For Theorem~\ref{thm2}, since the recurrences of the upper
and lower subsystem are clear, Poincar\'e return map can be defined correctly.
Further, we establish relationships between ${\cal P}^\pm(x,y)$
and the fixed points of Poincar\'e return map. Finally, Theorem \ref{thm2}
is proved by taking appropriate ${\cal P}^\pm(x,y)$. On the other
hand, we prove that the grazing loops shown in Figure~\ref{Fig-Gra}(c) 
indeed exhibits the mechanism by which crossing limit cycles and 
sliding limit cycles arise from the splitting of the tangent points. Meanwhile, 
we also present the perturbation system in which this mechanism occurs, i.e., 
the system~\eqref{pws-thm2-pf}, and in fact, this system is different from 
the perturbation system given in \cite{CFSCM}.


\end{document}